\tikzset{mid arrow head/.style={
    decoration={markings, mark=at position 0.55 with
      {\arrow[scale=1.3]{>}}}, postaction={decorate}
  }
}
\tikzset{mid arrow head reversed/.style={
    decoration={markings, mark=at position 0.55 with
      {\arrow[scale=1.3]{<}}}, postaction={decorate}
  }
}
\tikzstyle{filled}=[
\tikzstyle{vertex}=[
\tikzstyle{triangle label}=[
\tikzstyle{edge label}=[
\tikzstyle{node label}=[black]
\colorlet{TriangleLabel}{black}
\colorlet{EdgeLabel}{black}
\colorlet{NodeLabel}{black}
\tikzset{
  edge node/.code={%
      \expandafter\def\expandafter\tikz@tonodes\expandafter{\tikz@tonodes
        #1}}}
\tikzset{
  subseteq/.style={
    draw=none,
    edge node={node [sloped, allow upside down, auto=false]{$\subset$}}},
  Subseteq/.style={
    draw=none,
    every to/.append style={
      edge node={node [sloped, allow upside down, auto=false]{$\subset$}}}
  }
}
\pgfplotsset{compat=1.18}
\DeclareMathOperator{\sgn}{sgn}
\DeclareMathOperator{\Id}{Id}
\newcommand{\defeq}{\mathrel{\mathop:}=}
\newcommand{\End}{{\rm End}}
\newcommand{\Hom}{{\rm Hom}}
\newcommand{\eval}[2]{\langle #1,\;#2 \rangle}
\newcommand{\Eval}[2]{\bigl\langle #1,\;#2 \bigr\rangle}
\newcommand{\simplex}[2][0]{[#1\,...\,#2]}
\newcommand{\face}[3][\hat{i}]{[#2\,...\,#1\,...\,#3]}
\def\R{\mathbb{R}}
\def\X{\tilde{X}}
\def\Y{\tilde{Y}}
\def\E{\tilde{E}}
\def\tnabla{{\widetilde{\nabla}}}
\def\C{\tilde{C}}
\def\tsigma{\tilde{\sigma}}
\def\coarse{\mathfrak{C}}
\def\Q{\mathcal{Q}}
\def\GL{{\rm GL}}
\def\complex{\mathbb{C}}
\DeclareMathOperator{\sd}{sd}
\newcommand{\SiCo}{\textrm{SiCo}}
\newcommand{\OSiCo}{\textrm{OSiCo}}
\newcommand{\somethingdef}[4]{{\left#1 {#2} \ \left| \ {#3} \vphantom{#2} \right. \right#4}}
\newcommand{\setdef}[2]{\somethingdef{\{}{#1}{#2}{\}}}
\newtheorem{theorem}{Theorem}[section]
\newtheorem{proposition}[theorem]{Proposition}
\newtheorem{corollary}[theorem]{Corollary}
\newtheorem{lemma}[theorem]{Lemma}
\theoremstyle{definition}
\newtheorem{definition}[theorem]{Definition}
\newtheorem{example}[theorem]{Example}
\theoremstyle{remark}
\newtheorem{remark}[theorem]{Remark}
\newtheorem{Construction}[theorem]{Construction}
\begin{document}
\title[Discrete Vector Bundles]{Discrete Vector Bundles with Connection}
\author{Daniel Berwick-Evans}
\email{danbe@illinois.edu}
\address{Department of Mathematics, University of Illinois at Urbana-Champaign, 1409 West Green Street, Urbana, IL 61801}
\author{Anil N. Hirani}
\email{hirani@illinois.edu}
\address{Department of Mathematics, University of Illinois at Urbana-Champaign, 1409 West Green Street, Urbana, IL 61801}
\author{Mark D. Schubel}
\email{mdschubel@gmail.com}
\address{Department of Physics, University of Illinois at Urbana-Champaign, 1110 West Green Street, Urbana, IL 61801}
\curraddr{Apple Inc., One Apple Park Way, Cupertino, CA 95014}
\date{\today}

\begin{abstract}
  We develop a combinatorial theory of vector bundles with connection  on locally ordered simplicial complexes. This is a first step towards a discrete exterior calculus for bundle-valued forms. The basic building block is the discrete exterior covariant derivative, a forward-difference operator defined on bundle-valued cochains. Many standard objects in differential geometry (e.g., curvature, connection 1-forms, gauge transformations) can be understood via the discrete covariant derivative operator,  with their defining formulas identical to the smooth setting. These discrete objects satisfy all of the expected algebraic identities, such as naturality with respect to simplicial maps, and a Bianchi identity for discrete curvature. We also show that flat discrete connections determine a cochain complex that computes twisted de~Rham cohomology in a local coefficient system determined by the discrete vector bundle, with twisted Poincar\'e duality (of densities) being one application. Finally, a coarsening operation applied to bundle-valued cochains provides a direct and concrete comparison with the recent framework for discrete bundles of Christiansen and Hu. 
\end{abstract}

\maketitle

\section{Introduction}

Numerical approaches to partial differential equations often rely on combinatorial models for differential geometric objects. These models can encode structures of interest from a purely mathematical point of view, particularly when the combinatorial model faithfully encodes algebraic identities from geometry. In turn, such algebraic identities ensure that basic geometric concepts are correctly captured by the resulting numerical method. 

A fundamental example comes from the algebraic identities relating the gradient, curl, and divergence. These identities can be phrased (and generalized) in terms of the exterior derivative and Hodge star operators acting on differential forms. There are robust combinatorial models for the Hodge-de~Rham complex in which the standard algebraic identities hold, e.g.,~\cite{Dodziuk1974,ArFaWi2006,ArFaWi2010,Hirani2003,Wilson2007}.
These combinatorial models and their recent generalizations have met success in a wide variety of applications, e.g., computational electromagnetism~\cite{Bossavit1988a,Hiptmair1999}, elasticity~\cite{ArWi2002,ArFaWi2007, Li2018}, numerical relativity~\cite{Quenneville-Belair2015,Li2018}, fluid mechanics~\cite{HiNaCh2015,MoHiSa2016,PaGe2017,NiTeVo2017,JaAbMoSa2021,WaJaHiSa2023}, quantum electrodynamics~\cite{PhFaScTu2023}, photonics~\cite{MoRa2023} and other areas of physics, geometry, and computer graphics. The combinatorial wedge product leads to even more sophisticated algebraic structures, e.g., the failure of associativity is encoded by an $A_\infty$-structure~\cite{DoMoSh2007b,DoMoSh2008,Liu2026}. 

This paper initiates a generalization of these combinatorial models to differential forms with values in a vector bundle. Specifically, we construct combinatorial versions of the exterior covariant derivative $d_\nabla$ and various products. In smooth geometry,  $d_\nabla\circ d_\nabla$ encodes the curvature of the connection. We define a combinatorial version of curvature via this formula, and then show that the resulting structure satisfies all the expected properties, including a Bianchi identity.

One eventual goal is development of coordinate-independent numerical methods for problems involving bundle-valued forms, e.g., elasticity, Yang--Mills equation (see Remark~\ref{rmk:YM}), granular defect dynamics in materials and other. These problems can also be studied in combination (such as magnetohydrodynamics in curved spacetime) necessitating a uniform framework that encompasses all the desired applications. These longer-term goals require enhancements of the theory presented below, e.g., a suitable generalization of the combinatorial Hodge star operator of~\cite{Hirani2003}. 

The goal of the present paper is to provide a basic framework that is both consistent with the latest developments in numerical analysis as well as the direct antecedent of this work, discrete exterior calculus (DEC)~\cite{Hirani2003}. The spaces of cochains and $d_\nabla$ provide a combinatorial model for each row of the Bernstein-Gelfand-Gelfand (BGG) construction~\cite{CaSlSo2001} as used in numerical analysis~\cite{ArHu2021,BeGa2025,HuLi2025}. The interpretation of the ideas of this paper in terms of double forms~\cite{deRham1984} and a cochain discretization of first Bianchi sums needed in BGG~\cite{Kulkarni1972} remain open, although some recent progress has been made~\cite{Zhu2026}.

We emphasize that we do not provide a framework for discretizing smooth bundle-valued forms although Construction~\ref{construction2} sketches one possibility. The discretization of forms in~\cite{BrToGaDe2024} carries this out effectively. Our starting point is that the discrete forms are given.

\subsection*{Review of vector bundle valued forms in smooth geometry}
For $E\to M$ a smooth vector bundle over a smooth manifold $M$, the vector space of $E$-valued differential $k$-forms is defined as~$\Omega^k(M;E) := \Gamma(\Lambda^kT^\ast M\otimes E)$. The graded vector space $\Omega^\bullet(M;E)\cong \bigoplus_k \Omega^k(M;E)$ is a module over the graded algebra $\Omega^\bullet(M)$ of differential forms via the $C^\infty(M)$-linear map 
\begin{equation}
 \Omega^k(M;E)\times \Omega^l(M)\to \Omega^{k+l}(M;E),\qquad 
(\alpha,w)\mapsto  \alpha \wedge w,\quad k,l\in \mathbb{N}.\label{eq:moduleact}
\end{equation}
A connection $\nabla\colon \Gamma(E)\to \Omega^1(M;E)$ on $E$ determines the \emph{exterior covariant derivative} 
\begin{eqnarray}
  \Gamma(E)\simeq\Omega^0(M;E)\stackrel{\nabla=d_\nabla}{\longrightarrow}
  \Omega^1(M;E)\stackrel{d_\nabla}{\to}
  \Omega^2(M;E)\stackrel{d_\nabla}{\to} \Omega^3(M;E)\to
  \cdots \label{eq:sequence}
\end{eqnarray}
This operator $d_\nabla$ is compatible with the module structure~\eqref{eq:moduleact} by way of the Leibniz rule,
\begin{eqnarray}
d_\nabla(\alpha \wedge w)=d_\nabla \alpha \wedge w +(-1)^{|\alpha|}\alpha \wedge d w \label{Leibniz}
\end{eqnarray}
where~$d\colon \Omega^k(M)\to \Omega^{k+1}(M)$ is the ordinary exterior derivative. When $E$ is the trivial line bundle and $\nabla$ is the trivial connection, $d_\nabla=d$ and~\eqref{Leibniz} becomes the familiar Leibniz rule for the exterior derivative. In general,~\eqref{eq:sequence} need not be a cochain complex due to the presence of \emph{curvature} 
\begin{equation}\label{eq:dnabla2}
  F_\nabla:=d_\nabla\circ d_\nabla\in \Omega^2(M;\End(E))
\end{equation} 
defined as a 2-form valued in the endomorphism bundle $\End(E)$ of $E$. Curvature determines an operator on $E$-valued forms via $\Omega^\bullet(M)$-linear maps
\begin{eqnarray}
F_\nabla \colon \Omega^k(M;E)\to \Omega^{k+2}(M;E).\label{eq:actofend}
\end{eqnarray} 
The structures~\eqref{eq:moduleact}-\eqref{eq:actofend} are natural: a smooth map $f\colon N\to M$ determines a pullback bundle $f^\ast E\to N$, and the resulting maps $f^\ast\colon \Omega^\bullet(M;E)\to \Omega^\bullet(N;f^\ast E)$ commute with the exterior covariant derivatives for the pullback connection $f^\ast\nabla$ on $f^\ast E$. In particular, the curvature of $\nabla$ on $E$ pulls back to the curvature of $f^*\nabla$ on $f^*E$. This generalizes naturality of the de~Rham complex.

Another common approach to connections works relative to a trivialization of~$E$ over an open subset~$U\subset M$: for a chosen isomorphism of vector bundles $E|_U\simeq U\times \R^r$, one obtains local expressions for the connection and curvature 
\begin{eqnarray}
d_\nabla\stackrel{{\rm locally}}{=}d+A, \quad F|_U\stackrel{{\rm locally}}{=}F_A:=dA+A\wedge A, \qquad A\in \Omega^1(U;\End(\R^r))\label{eq:smoothA}
\end{eqnarray}
in term of endomorphism valued 1-forms~$A$. A change of the local trivialization of $E|_U$ by $g\colon U\to \GL_r(\R)$ is a \emph{gauge transformation}. This changes the 1-form $A$ by
\begin{eqnarray}
A\mapsto gAg^{-1}-dgg^{-1}.\label{eq:gaugetransf}
\end{eqnarray}
The curvature~\eqref{eq:smoothA} is gauge covariant: $F_A\mapsto gF_Ag^{-1}$. The theory of connections can be rephrased in terms of 1-forms $A$ and the action by gauge transformations~\eqref{eq:gaugetransf}, as is more common in physics.

\begin{remark} \label{rem:taylor}
We recall the sense in which curvature is the infinitesimal expression of parallel transport. Fix a smooth vector bundle $E\to M$ with connection $\nabla$, and local chart $\varphi\colon \R^n\to M$ on which $E$ trivializes. Let $[0,\epsilon]\times [0,\epsilon]\subset \R^n$ be the standard square in the $x_i$-$x_j$-plane with side length $\epsilon$. Fixing a trivialization of $\varphi^*E|_{[0,\epsilon]^2}\simeq [0,\epsilon]\times [0,\epsilon]\times \R^r$ over $[0,\epsilon]\times [0,\epsilon]$, one can integrate the endomorphism valued 2-form $\varphi^*F$, finding
\begin{eqnarray}\label{eq:Taylor}
\int_{[0,\epsilon]^2} \varphi^*F=\int_{[0,\epsilon]^2} F_{ij}dx_i\wedge dx_j={\rm Id}-{\rm Hol}+O(\epsilon^3)\in \End(\R^r)
\end{eqnarray}
where $F_{ij}$ is the $dx_i\wedge dx_j$-component of $F\in \Omega^2(M;\End(E))$ in the chosen chart and ${\rm Hol}$ is the $r\times r$-matrix gotten from the holonomy (i.e., parallel transport) along the boundary of $[0,\epsilon]\times [0,\epsilon]$ based at $(0,0)$ and using the positive orientation on the $x_i$-$x_j$-plane. One method for verifying~\eqref{eq:Taylor} is to express holonomy as a path-ordered exponential and Taylor expand in powers of $\epsilon$, e.g., see \cite[page 247]{BaMu1994}. Our discretization method recovers discrete curvature the approximatation to curvature on the right hand side of~\eqref{eq:Taylor}, see Remark~\ref{rmk:holvscuvature}. 
\end{remark}

\begin{remark}\label{rmk:YM}
One source of vector bundles comes from the associated bundle construction for a principal $G$-bundle $P\to X$ and $G$-representation $\rho\colon G\to \GL(V)$,
\begin{equation}\label{eq:associatedbundle}
E:=P\times_G V.
\end{equation}
Taking the frame bundle of a vector bundle as a $\GL(n)$-principal bundle, all vector bundles arise as examples of~\eqref{eq:associatedbundle}.
For the adjoint representation of $G$ on its Lie algebra $\mathfrak{g}$, define ${\rm Ad}(P):=P\times_G \mathfrak{g}$. The curvature of a principal $G$-bundle with connection is an ${\rm Ad}(P)$-valued 2-form which can can be identified with the square of a covariant derivative operator $d_\nabla\colon \Omega^\bullet(X;{\rm Ad}(P))\to \Omega^\bullet(X;{\rm Ad}(P))$ for the associated connection $\nabla$ under~\eqref{eq:associatedbundle}. In this way, the discrete theory developed below encompasses curvature of principal $G$-bundles, and hence some rudiments of Yang--Mills theory (though we do not address the crucial structure of the Hodge star operator below). 
\end{remark}

\subsection*{Flat bundles and twisted cohomology}
A \emph{flat} connection has vanishing curvature, or equivalently $d_\nabla\circ d_\nabla=0$. This property makes~\eqref{eq:sequence} into a cochain complex that computes the \emph{$E$-twisted cohomology} of $M$, also called \emph{cohomology with local coefficients} (in $E$), denoted ${
\rm H}^\bullet(M;E)$. Twisted cohomology does not have a product, but instead is a module over ordinary cohomology, using the module structure on forms~\eqref{eq:moduleact} and the Leibniz property~\eqref{Leibniz}. 

\begin{remark} 
The covariantly constant sections of a flat vector bundle $(E,\nabla)\to M$ define a locally constant sheaf $\mathcal{E}$, see~\eqref{eq:smoothsheaf} below. The \v{C}ech--de~Rham double complex allows one to identify $E$-twisted de~Rham cohomology of $M$ with sheaf cohomology valued in $\mathcal{E}$. Our discrete framework affords a cochain complex that is \emph{equal} to the \v{C}ech complex for a certain open cover (see the proof of Theorem~\ref{thm:Cech}), thereby providing a combinatorial model for $E$-twisted de~Rham cohomology. 
\end{remark}

\begin{example}[Twisted Poincar\'e duality]\label{ex:twistedPoincare}
An important example of twisted cohomology takes local coefficients in the orientation line, ${\rm or}(M):=\Lambda^n(TM)$ for $n={\rm dim}$: a section of ${\rm or}(M)$ at $p\in M$ is an orientation of $T_pM$. The ${\rm or}(M)$-twisted differential forms in this case are \emph{densities}, which can be integrated independent (and in the absence) of an orientation on $M$. Even in the presence of orientations densities can be useful: integrals of densities are invariant under orientation-reversing diffeomorphisms, which is useful in applications. Combining the integration of densities with the module structure for twisted cohomology affords a map
\begin{eqnarray}
{\rm H}^{n-k}(M;{\rm or}(M))\otimes {\rm H}^k(M)\to {\rm H}^n(M;{\rm or}(M))\xrightarrow{\int} \R.\label{eq:twistedPD}
\end{eqnarray}
\emph{Twisted Poincar\'e duality} is the statement that~\eqref{eq:twistedPD} is a perfect pairing, resulting in an isomorphism between the vector space ${\rm H}^k(M)$ and (the dual of) ${\rm H}^{n-k}(M;{\rm or}(M))$. Twisted Poincar\'e duality is a crucial feature in the geometry of densities, and we emphasize that the existence of the first arrow in~\eqref{eq:twistedPD} relies crucially on the Leibniz rule~\eqref{Leibniz} for $d_\nabla$. 
\end{example}

\subsection*{Statement of results}
This paper develops a theory of discrete vector bundles with connection over simplicial complexes with properties mirroring~\eqref{eq:moduleact}-\eqref{eq:actofend} that are appropriately natural with respect to maps of simplicial complexes. The theory has three anchor points. The first is that when applied to a trivial bundle with trivial connection, we recover the metric-free part of DEC~\cite{Hirani2003} where naturality is the usual pullback of (discrete) cochains~\cite{ScBeHi2024}. The second is that our discrete curvature has geometric interpretation in terms of holonomy (or parallel transport) as in~\eqref{eq:Taylor}. Third and finally, a flat discrete connection affords a cochain complex that computes the expected twisted cohomology, and (in particular) we recover a statement of discrete twisted Poincar\'e duality as in~\eqref{eq:twistedPD}. One additional (and important) upshot is a coarsening procedure that connects our framework with the recent work of Christiansen and Hu~\cite{ChHu2023}; see~\S\ref{sec:coarse}.

To set up the statements of our main theorems, a \emph{discrete vector bundle with connection} on a simplicial complex $X$ is the data of a vector space $E_x$ for each vertex of $X$ and the data of a vector space isomorphism $E_x\xrightarrow{\sim} E_y$ for each edge in~$X$.  Given a discrete vector bundle with connection and $k\in \mathbb{N}$, we construct the vector space $C^k(X;E)$ of \emph{$E$-valued $k$-cochains}. Our first result endows spaces of cochains with the expected structures from differential geometry.

\begin{theorem}[Structure-preserving discretization]\label{mainthm1}
  Let $X$ and $Y$ be locally ordered simplicial complexes,~$E$ a discrete vector bundle with connection over $X$, and $f: Y \to X$ an abstract simplicial map. We construct operations
  \begin{align*}
    d_\nabla &: C^k(X; E) \to C^{k+1}(X; E)\\
    \smile &: C^k(X;E)\times C^l(X) \to C^{k+l}(X; E)\\
    \smile &: C^k(X; \Hom(E)) \times C^l(X;E) \to C^{k+l}(X; E)\\
    d_{\nabla^\Hom} &: C^k(X; \Hom(E)) \to C^{k+1}(X; \Hom(E))\\
    F &:=d_\nabla\circ d_\nabla \in C^2(X; \Hom(E))\\
    f^\ast&: C^k(X;E)\to C^k(Y;f^*E)
  \end{align*}
  such that for $\alpha \in C^k(X; E)$ and $w \in C^l(X)$: 
  \begin{enumerate}[label = (\roman*)]
  \item the Leibniz rule holds (Proposition~\ref{prop:Leibniz})

    \[
      d_\nabla (\alpha \smile w) = d_\nabla \alpha \smile w +
      (-1)^k \alpha\smile d w\, ,
    \]
  \item the operations are natural (Propositions~\ref{prop:cup-naturality} and~\ref{prop:dnaturality})
    \[
    f^\ast(\alpha \smile w) = f^\ast\alpha \smile f^\ast w,\qquad f^\ast d_\nabla = d_{f^\ast\nabla} f^\ast\, ,
    \]
  \item the square of the exterior covariant derivative is the curvature operator (Proposition~\ref{prop:dnabla2-alpha}) \[d_\nabla d_\nabla \alpha = F \smile\alpha,\]
  \item curvature satisfies the Bianchi identity (Proposition~\ref{prop:bianchi}) $d_{\nabla^\Hom}\, F = 0$.
  \end{enumerate}
\end{theorem}

\begin{remark}
When applied to a discrete real line bundle with trivial connection, these statements immediately reduce to previously known results in DEC~\cite{Hirani2003,ScBeHi2024}. 
\end{remark}

The formula~\eqref{eq:smoothA} for $d_\nabla$ in a local trivialization in smooth geometry provides a natural discrete analog of the connection 1-form $A$, namely we take the difference of discrete covariant derivatives
\begin{eqnarray}
A:=d_\nabla-d\label{eq:discreteA}
\end{eqnarray}
where above $d$ is the covariant derivative for the trivial discrete connection. Our second result shows that the discrete connection 1-form~\eqref{eq:discreteA} has expected properties relative to the structures in Theorem~\ref{mainthm1}. 

\begin{theorem}\label{mainthm2}
The discrete 1-form~\eqref{eq:discreteA} satisfies
\begin{enumerate}
    \item the covariant derivative is determined by $A$ as $d_\nabla=d+A$;
    \item the connection 1-form $A$ pulls back to a connection 1-form $f^*A$;
    \item the curvature satisfies $F=dA+A\smile A$ and the Bianchi identity $(d+A)F=0$ holds;
    \item under a gauge transformation $g$ of the discrete vector bundle with connection, $A$ transforms as $A\mapsto gAg^{-1}-dgg^{-1}$, and the curvature transforms as $F\mapsto gFg^{-1}$. 
\end{enumerate}
\end{theorem}

\begin{remark}
Property (1) is a tautology, but confirms that the data of discrete connections and curvature can be reformulated in terms of discrete 1-forms~$A$. Properties (2)-(4) reformulate (ii)-(iv) in Theorem~\ref{mainthm1}. 
\end{remark}

A discrete vector bundle with connection is \emph{flat} if its curvature vanishes, i.e., $F=0$. Such flat bundles provide a bridge between discrete geometry and cohomology with twisted coefficients. 

\begin{theorem}\label{thm:Cech}
Let $(E,\nabla)$ be a discrete bundle with flat discrete connection. The cochain complex $(C^\bullet(X;E),d_\nabla)$ computes the twisted cohomology of $E$ in a local system determined by $(E,\nabla)$. When $(E,\nabla)$ arises as the discretization of a smooth vector bundle with flat connection on a manifold $M$ with triangulation $X$, this twisted cohomology agrees with the twisted de~Rham cohomology of $M$. 
\end{theorem}

Applying the above to the special case of the orientation line ${\rm or}(TM):=\Lambda^{\rm top}(TM)$ of a smooth manifold $M$, we obtain a discrete analog of Example~\ref{ex:twistedPoincare}, see Example~\ref{ex:twistedPoincarediscrete} below. 

\begin{corollary}
The cochain complex of discrete cochains valued in the flat discrete vector bundle associated to the orientation line bundle on $M$ satisfy twisted Poincare duality. 
\end{corollary}

\begin{remark}
In \v{C}ech cohomology, the open cover $\{U_i\}_{i \in I}$ relative to which one defines the \v{C}ech complex is indexed by an \emph{ordered} set (e.g., \cite[pages 92 and 110]{BoTu1982}). This ordering is used to fix signs in the definition of the \v{C}ech differential. Analogously, our discrete theory depends on a (local) ordering of the simplicial complex $X$. As usual, different choices of ordering result in quasi-isomorphic chain complexes. There are tricks for inducing a local ordering on an unordered simplicial complex, e.g., by taking a barycentric subdivision; see~\S\ref{subsec:subdiv} below. 
\end{remark}

A coarsening procedure detailed in~\S\ref{sec:coarse} recovers the discrete vector bundle constructions of Christiansen and Hu~\cite{ChHu2023} for unordered simplicial complexes within our framework.

\subsection{Related work}

Just as there are multiple approaches to the combinatorial de~Rham complex, there are multiple approaches to its vector bundle-valued generalizations. In all cases, one combines structures in a preferred combinatorial model of the de~Rham complex with ideas from lattice gauge theory following Wilson~\cite{Wilson1974} (see also~\cite{MoMu1997}). Our approach generalizes parts of DEC, a combinatorial model for the de~Rham complex that encodes algebraic identities and also can be directly compared to finite element methods~\cite{Zhu2026,ZhChHuHi2025,GuPo2025}; see~\S\ref{sec:DEC} below. With this direct comparison to established numerical methods, we expect the theory developed below to be amenable to applications in physics and engineering. 

A related theory for combinatorial covariant exterior derivatives (but not products) has been recently developed by Christiansen and Hu~\cite{ChHu2023}. In~\S\ref{sec:coarse} we show that their constructions can be extracted by coarsening objects and operators of our theory to unordered simplicial complexes. 
Some aspects of~\cite{ChHu2023} can be viewed as a new approach to simplicial gauge theory and this is the latest in a series of papers on computational gauge theory and related topics by Christiansen and co-authors which combine ideas from lattice gauge theory and finite element methods~\cite{Christiansen2009, ChHa2009, ChHa2011a, ChHa2012}. A recent finite element based approach to discretization of Levi-Civita connection is in~\cite{BeGa2022}.

Dimakis and M\"uller-Hoissen~\cite{DiMu1994} develop differential algebras on discrete sets as a foundation for discrete differential calculus and gauge theory. A calculus for simplicial complexes can be derived as a special case. They do not consider questions of naturality under simplicial maps. Skopnekov~\cite{Skopenkov2023}  discretizes classical field theories from a Lagrangian relying on cochains, coboundaries, and cap products and establishes a discrete Noether theorem. 

Braune and co-authors~\cite{BrToGaDe2024} build on an earlier version of the present paper. They introduce a detailed framework for discretizing bundle-valued forms using a local trivialization. Their construction is an effective realization of Construction~\ref{construction2} using parallel propagated frames and a choice that makes the error in discrete Stokes' theorem vanish in the limit. They also introduce an alternation operator to compose with our $d_\nabla$ and related products showing various numerical and theoretical convergence results.

\section{Background}
\label{sec:background}
\subsection{Locally ordered simplicial complexes}\label{subsec:simplicial}
The material in this section is standard e.g.,~\cite{Munkres1984,FrPi1990}. An \emph{abstract simplicial complex} $X$ is a set of finite sets such that if $\tau\in X$ and $\sigma \subset \tau$ then $\sigma \in X$. Each element of $X$ is called a \emph{simplex}. If the vertices of $X$ are points in $\R^N$ for some $N\ge 0$ and vertices in each simplex are affinely independent then $X$ is also a \emph{piecewise-linear (PL) Euclidean simplicial complex}\footnote{Every PL Euclidean simplicial complex is also an abstract one and we will mean either type if there is no confusion. Most applications we envision take place on PL Euclidean simplicial complexes but this is not a requirement and all definitions of discrete operators are valid for abstract ones unless specified.}. A manifold simplicial complex is a PL Euclidean simplicial complex that is a topological manifold. 

A simplex $\sigma$ with $k+1$ elements has \emph{dimension} $\dim \sigma = k$ and called a $k$-\emph{simplex}. 
The set of all $k$-simplices of $X$ is $X_k \defeq \setdef{\sigma \in X}{\dim \sigma = k}$ and the $k$-\emph{skeleton} of $X$ is $X^{(k)} \defeq \setdef{\sigma\in X}{\dim \sigma \le k}$. Each $X^{(k)}$ is a \emph{subcomplex} of $X$, a subset which is also a simplicial complex. A simplex $\tau \subset \sigma$ is a \emph{face of} $\sigma$ denoted $\tau \prec \sigma$ or $\sigma \succ \tau$, and to allow for $\tau=\sigma$, $\tau \preceq \sigma$ or $\sigma \succeq \tau$. A \emph{maximal} simplex is not a face of any other simplex in $X$. We refer to abstract or Euclidean 0-, 1-, 2-, and 3-simplices as vertices, edges, triangles and tetrahedra, respectively. As a $k$-simplex is determined by its vertices, we often use notation as $\sigma = \{v_0,\ldots,v_k\}$. We are only interested in \emph{finite complexes}, meaning $X$ is a finite set. In particular, the dimension $\max_{\sigma \in X}\dim \sigma$ is well-defined and finite. 

An \emph{abstract simplicial map} $f:X\to Y$ between simplicial complexes is a function determined by a vertex map $f_0:X_0\to Y_0$ with the proeprty that $\sigma \in X$ implies $f(\sigma) \in Y$. Abstract simplicial complexes and simplicial maps form the objects and morphisms of a category denoted $\SiCo$~\cite{FrPi1990}. 

 An \emph{orientation} of a simplex $\sigma \in X$ is a choice of equivalence class of vertex ordering, where two choices of ordering are in the same equivalence class if they are related by an even permutation. For a fixed orientation of $\sigma$, we use the notation $-\sigma$ to denote the other orientation. For example, for a permutation $\pi\in S_{k+1}$ we have $[v_0,\ldots,v_k] = \sgn(\pi) [v_{\pi(0)},\ldots,v_{\pi(k)}]$. When orientations of $\tau^k,\sigma^k$ can be compared then the \emph{relative orientation} is $o(\tau^k,\sigma^k) = +1$ or $-1$ according to if orientations agree or not. In particular for $\sigma^{k+1} = \simplex[v_0]{v_{k+1}}$, the $i$th face $\sigma^k = \face[\hat{v}_i]{v_0}{v_{k+1}}$ has the \emph{induced orientation} $o(\sigma^k,\sigma^{k+1}) = (-1)^i$. 

To set up a locally ordered simplicial complex, let $V$ be a finite set and $R \subseteq V\times V$ a binary reflexive antisymmetric relation on $V$. Consider a set $X$ of subsets $\sigma \subseteq V$ such that $R\;\cap \;\sigma \times \sigma$ is a total order on $\sigma$. Then $X$ is a simplicial complex and the pair $(X,R)$ is called a \emph{(locally) ordered simplicial complex}~\cite[page 111]{FrPi1990}. 
We observe that $R$ determines an ordering of the vertices of each simplex in $X$; but vertices of distinct simplices, even those of different faces of a given simplex, need not be comparable. Typically we omit $R$ from the notation: hereafter $X$ will denote a locally ordered simplicial complex, and for $\sigma, \tau \in X$ with $(\sigma,\tau)\in R$ we write $\sigma \leq \tau$. 

For locally ordered simplicial complexes $X$ and $Y$, an \emph{order-preserving simplicial map} $f\colon X\to Y$ is a simplicial map with the property that $\sigma\leq \tau$ implies $f(\sigma) \leq f(\tau)$. Locally ordered simplicial complexes and order-preserving maps form the objects and morphisms of a category denoted $\OSiCo$ with a functor $\OSiCo\to \SiCo$ that forgets the ordering~\cite{FrPi1990}. There is no obvious functor in the opposite direction: for $\X$ and $\Y$ (unordered) simplicial complexes and $f\colon \X\to \Y$ a simplicial map, choices of ordering on $\X$ and $\Y$ are generally not compatible with $f$. However, as described below there is a subdivision procedure that \emph{is} functorial.

\subsection{Subdivision functor} \label{subsec:subdiv}

For $\X$ an (unordered) simplicial complex of dimension $n$, its \emph{subdivision} $X:=\sd \X$ has as vertex set the simplices of $\X$, and whose maximal simplices are the set
$\setdef{\{\sigma^i, \sigma^{i-1}, \ldots, \sigma^0\}}{\sigma^i \succ \sigma^{i-1} \succ \ldots \succ \sigma^0}$, for all maximal $\sigma^i \in \X$, $0\le i \le n$. The superscripts denote dimension. The vertices of each simplex in $X$ have a canonical total order given by dimension and thus $X$ is an ordered simplicial complex. Furthermore, for any simplicial map $f\colon \X\to \Y$ the map $\sd f: \sd \X \to \sd \Y$ is an order-preserving  simplicial map~\cite{FrPi1990} where the values of $\sd f$ are the obvious ones determined by $f$. 

\begin{remark}\label{rem:subdiv-notation}
    Simplices in $X := \sd \X$ are sets of sets and vertices in $X$ are those sets which are simplices of $\X$. For readability, a vertex of $X$ corresponding to $\sigma \in \X$ will be labeled $c_\sigma$ or $c(\sigma)$. In a geometric simplicial complex $c(\sigma)$ can be identified with a point associated with $\sigma$, such as a barycenter or circumcenter if such a center can be defined, or some arbitrary unspecified point, typically (but not necessarily) in the interior of $\sigma$. For example, for $\X$ simplicial complex of triangle $\{0,1,2\}$, one of the triangles in $X$ is $\{\{0,1,2\},\{1,2\},\{2\}\}$. An oriented version is $[\{0,1,2\},\{1,2\},\{2\}]$. A vertex of $X$ such as $\{0,1\}$ will be denoted $c(\{0,1\})$ or $c_{\{0,1\}}$ or $c_{01}$ or even just $01$. Thus  $\{\{0,1,2\},\{1,2\},\{2\}\} \in X$ might be denoted $\{c_{012}, c_{12}, c_2\}$ and the oriented version $[c_{012}, c_{12}, c_2]$, or dispensing with the commas $[c_{012}\, c_{12}\, c_2]$, or simply $[012, 12, 2]$. 
\end{remark}

\begin{remark}\label{rem:subdiv-vertex-order}
   The choice of ordering vertices of $\sd \X$ by decreasing dimension has an important consequence for later definitions. In particular it is a useful fact that \emph{all} $k$-simplices in $\sd \X$ that result from the subdivision of $\sigma^k \in \X$ have $c(\sigma^k)$ as the smallest vertex. A typical such simplex is $\{c(\sigma^k),c(\sigma^{k-1}),\ldots,c(\sigma^0)\}$ where $\sigma^k \succ \sigma^{k-1} \succ \ldots \succ \sigma^0$.
\end{remark}

\begin{remark}\label{rem:c-notation}
    For $0 \le i \le j \le n$ and $\sigma^i \preceq \sigma^j$ we will sometimes use the shorthand notation $c_{\sigma^i \preceq \sigma^j} = c[\sigma^i \preceq \sigma^j] := [c(\sigma^i), c(\sigma^{i+1}), \ldots, c(\sigma^j)]$ with $\sigma^i \preceq \sigma^{i+1} \preceq \ldots \preceq\sigma^j$ and $c_{\sigma^i \succeq \sigma^j} = c[\sigma^i \succeq \sigma^j] := [c(\sigma^j), c(\sigma^{j-1}), \ldots, c(\sigma^i)]$ with $\sigma^j \succeq \sigma^{j-1} \succeq \ldots \succeq \sigma^i$ for oriented simplices in $\sd \X$.       
\end{remark}

\subsection{Duals and cubes}\label{sbsc:duals-cubes}
For each pair $\sigma^k \preceq \sigma^{k+l}$ in $\X$ for $0\le k,l, k+l\le n$ there are $l!$ dimension $l$ simplices in $\sd \X$ of the form $\{c(\sigma^k), c(\sigma^{k+1}),\ldots, c(\sigma^{k+l})\}$ with $\sigma^k \preceq \sigma^{k+1} \preceq \ldots \preceq \sigma^{k+l}$. We will call the cell formed by the union of these, and oriented as described below, the \emph{dual of} $\sigma^k$ \emph{in} $\sigma^{k+l}$ denoted $D(\sigma^k\preceq\sigma^{k+l})$. In DEC, for $\X$ a manifold simplicial complex of dimension $n$ the cell $D(\sigma^k \preceq\sigma^n)$ is called the \emph{dual cell} of the \emph{primal} simplex $\sigma^k$ and denoted $\star\sigma^k$. The cells $D(\sigma^k\preceq\sigma^{k+l})$ are combinatorial cubes of dimension $l$ with $2^l$ vertices with vertex set $\cup_{i=k}^{k+l} \setdef{c(\sigma^i)}{\sigma^k \preceq \sigma^i \preceq \sigma^{k+l}}$. 
The \emph{cubical cell complex} associated with $\X$, denoted $\Q(\X)$, is the CW complex formed by the combinatorial cubes $D(\sigma^k\preceq\sigma^{k+l})$, $0 \le k,l,k+l \le n$. It is a refinement of $\X$ like the subdivision $\sd \X$ except that the cells are combinatorial cubes constructed from simplices of $\sd \X$. 

Since $D(\sigma^k\preceq\sigma^{k+l})$ are dual cells they can be oriented by the algorithm in ~\cite{Hirani2003}, which we recall here. To orient such an $l$-dimensional cube each $l$-simplices of $\sd \X$ in this cube should be oriented as
\begin{equation}\label{eq:dual-orientation}
o(c[\sigma^0\preceq \sigma^k],\, \sigma^k)\;\;
o(c[\sigma^0\preceq \sigma^{k+l}],\, \sigma^{k+l})
\;\;\simplex[c(\sigma^k)]{c(\sigma^{k+l})}\, ,
\end{equation}
using the shorthand notation of Remark~\ref{rem:c-notation}. If the simplices $c[\sigma^i\preceq \sigma^j]$ in~\eqref{eq:dual-orientation} are written in the reverse order $c[\sigma^j \succeq \sigma^i]$ then an overall sign factor of $(-1)^{kl}$ is needed. It can be shown that all $l$-simplices of $D(\sigma^k\preceq\sigma^{k+l})$ are oriented consistently by~\eqref{eq:dual-orientation}. Furthermore all such $(n-k)$-simplices in $D(\sigma^k\preceq\sigma^n)$ (that is, in $\star \sigma^k$) are oriented consistently.

For each $k$-simplex $\sigma^k \in \X$, the union of simplices of the form $\{c(\sigma^0), \ldots,c(\sigma^k),\ldots,c(\sigma^n)\}$ with $\sigma^0\preceq \ldots \preceq \sigma^k \preceq \ldots \preceq \sigma^n$ is the \emph{support volume} of $\sigma^k$ denoted $V_{\sigma^k}$. The orientation rule~\eqref{eq:dual-orientation} gives consistent orientation to all the $n$-simplices which matches the orientation of $\sigma^n$. In a Euclidean simplicial complex the support volumes are the intrinsic convex hull of a simplex and its dual cell and tile the same underlying space as the original complex.

\begin{remark}\label{rem:cube-vertex-order}
    Similar to Remark~\ref{rem:subdiv-vertex-order} the choice of ordering vertices of $\sd \X$ in decreasing dimension order also has implications for the cubical refinement. For any $\sigma^k \preceq \sigma^{k+l}$ in $\X$, all of the $l!$ simplices in the cube $D(\sigma^k\preceq\sigma^{k+l})$ have the same smallest vertex $c(\sigma^{k+l})$ and the same largest vertex $c(\sigma^k)$.
\end{remark}

\subsection{Discrete exterior calculus and naturality}\label{sec:DEC}
The input data for DEC is a simplicial complex $X$ with additional decorations and properties. The discrete notions of differential form, exterior derivative, and wedge product only depend on the simplicial complex, importing standard methods from simplicial algebraic topology. When incorporating features that depend on a metric (e.g., a discretization of the Hodge star operator) essentially one requires that $X$ approximates a manifold. This assumption is appropriate given the desired applications: DEC has been used mostly as a method for solving partial differential equations (PDEs) on simplicial approximations of embedded orientable manifolds. Riemannian metric is encoded in DEC via a primal and dual cell complex incorporating orthogonality and lengths, areas, volumes etc. 

Given a differential form $\alpha \in \Omega^k(M)$, its discrete analog in DEC is the $k$-cochain $\int_{\rule{0.5em}{0.4pt}} \alpha$ which takes values in $\mathbb{R}$ when evaluated on $k$-dimensional chains, that is, the result of a de~Rham map~\cite{Dodziuk1976}. The space of real-valued $k$-cochains on simplicial complex $X$ are denoted $C^k(X)$. The coboundary operator on cochains plays the role of discrete exterior derivative ($d$) and the antisymmetrized cup product plays the role of a discrete wedge product ($\wedge$) and $d$ satisfies a graded Leibniz for this product. The wedge is also graded commutative but associative only up to homotopy and this leads to interesting algebraic structures in DEC~\cite{Liu2026}. A discrete contraction, Lie derivative, and a discrete Hodge star have also been defined but are not relevant in this paper. See~\cite{Hirani2003} for details. The discrete Hodge star construction involves a Poincar\'e dual complex of $X$ using circumcenters, and this will likely prove useful in future work in light of the coarsening described in~\S\ref{sec:coarse}. The discrete $d$ and $\wedge$ commute with pullback by abstract simplicial maps. Thus such maps play the role that smooth maps play in calculus on smooth manifolds.

\section{Discrete Vector Bundles with Connection}\label{sec:dscrtvctrbndls}

\subsection{Discrete vector bundles with connection and vector bundle-valued cochains}
\begin{definition}\label{def:dvbwc}
  For a locally ordered simplicial complex $X$, $E\to X$ a \emph{real} (respectively, \emph{complex}) \emph{discrete vector bundle with connection over $X$} is data:
  \begin{enumerate}
  \item for each vertex $i\in X_0$, a finite-dimensional real (respectively, complex) vector space $E_{i}$ called the \emph{fiber} at~$i$; and
  \item for each edge $[i,j] \in X_1$, an invertible linear map $U_{ji}\colon E_i\to E_j$ called \emph{parallel transport} from $i$ to $j$. 
  \end{enumerate}
These data are required to satisfy the compatibility condition $U_{ij}=U_{ji}^{-1}$. 
\end{definition}

\begin{remark}
The above definition only depends on the underlying unordered simplicial complex, but most of the desired constructions that follow (e.g., bundle-valued $k$-cochains and the exterior covariant derivative) make explicit use of the local ordering. We also note that a discrete vector bundle without connection is not particularly useful notion: this would yield vector spaces~$E_i$ at each vertex with no geometric relationship with each other. As connections will always be present, we sometimes refer to a discrete vector bundle with connection simply as a \emph{discrete vector bundle}.
\end{remark}

The \emph{rank} of a vector bundle on a connected component of $X$  is the (necessarily constant) dimension of the fibers. Given a subcomplex $Y\subset X$, the \emph{restriction} of a bundle is a discrete vector bundle with connection gotten from the obvious restriction of the data (1) and (2) above. 

One is always free to choose a basis for the vector space at each fiber, giving isomorphisms $\R^n\cong E_i$ or $\complex^n\cong E_i$ for each~$i$. Borrowing terminology from the physics literature, we refer to choices of such isomorphisms as a choice of \emph{gauge}. After a choice of gauge has been made, the parallel transport maps are determined by matrices in $\GL_n(\R)$ for real vector bundles or $\GL_n(\complex)$ for complex vector bundles. Below, we use the notation $\GL_n$ to denote either $\GL_n(\R)$ or $\GL_n(\complex)$, i.e., for statements that hold over both $\R$ and $\complex$.

For a simplex $\sigma$ in a locally ordered simplicial complex $X$, the \emph{origin vertex} of $\sigma$ is its lowest vertex according to the ordering. Thus each simplex has a fiber associated with it, the one at the origin vertex. The fiber at a vertex $v$ is shared by all simplices for which $v$ is the lowest vertex. 

\begin{definition}\label{def:cochain}
  A \emph{vector bundle valued $k$-cochain} $\alpha$ assigns to each oriented $k$-simplex $\sigma \in X$ with origin vertex $l$ a vector $\eval{\alpha}{\sigma}_l\in E_l$. For $\pi\in S_{k+1}$ a permutation, $\sigma = [v_0, \ldots, v_k]$, $\eval{\alpha}{[v_{\pi(0)}, \ldots, v_{\pi(k)}]}_l := \sgn(\pi) \eval{\alpha}{\sigma}_l$ where $\sgn(\pi)$ is the sign of $\pi$. The vector space of $k$-cochains is denoted $C^k(X;E)$. A \emph{section} $s$ is a vector bundle valued 0-cochain, that is, a map that assigns a vector $s_i\in E_i$ for each vertex.  
  \end{definition}

\begin{definition} \label{def:connection}
  The discrete \emph{covariant derivative}  $\nabla\colon C^0(X;E)\to C^1(X;E)$ is characterized by 
\begin{equation}
 \eval{\nabla s}{[i,j]}_i  := U_{ij} s_j-s_i\in E_i,\qquad i<j, \quad s\in C^0(X;E)=\Gamma(E). \label{eq:connection} 
\end{equation}
\end{definition}

\begin{remark}
We observe that the parallel transport maps (2) in Definition~\ref{def:dvbwc} are equivalent data to the covariant derivative $\nabla$. As such, we often use the notation $(E,\nabla)$ for a discrete vector bundle with connection. 
\end{remark}

\begin{definition} \label{def:bundlemaps}
 Let $X$ and $Y$ be locally ordered simplicial complexes and $f\colon Y\to X$ an order preserving abstract simplicial map. Given $(E,\nabla) \to X$ and $(E',\nabla') \to Y$ a \emph{map of discrete vector bundles covering $f$}, denoted $\bar{f}\colon (E',\nabla')\to (E,\nabla)$, is a collection of linear maps $\bar{f}_l\colon E_l'\to E_{f(l)}$, one for each $l \in Y_0$ so that the following diagram commutes
\begin{center}
  \begin{tikzcd}
  E_i' \arrow[r, "\bar{f}_i"] \arrow[d, "U_{ij}"] &  E_{f(i)} \arrow[d, "U_{f(i)f(j)}"]\\
  E_j' \arrow[r, "\bar{f}_j"] & E_{f(j)}
\end{tikzcd}
\end{center}
whenever there is an edge from vertex $i$ to $j$. In particular, if an edge $[i,j]$ in $Y$ is sent to a vertex in~$X$ under $f$ (so $f(i) = f(j)$), the parallel transport $U_{f(i) f(j)}=\Id_{E_{f(i)}}$ is required to be the identity.
\end{definition}

\begin{definition} \label{def:pullback-bundle}
  Given $(E,\nabla)\to X$ and $f\colon Y\to X$ an order-preserving simplicial map, the \emph{pullback bundle} over $Y$ is denoted $f^\ast(E,\nabla)=(f^*E,f^*\nabla)$ and defined as follows. Take as fibers $(f^*E)_i=E_{f(i)}$ for each $i\in Y_0$, and the connection $f^*\nabla$ is defined by assigning to each edge $[i,j] \in Y_1$ the parallel transport map:
  \[
    U_{ij} = 
    \begin{cases}
      U_{f(i)f(j)} & \text{if $[f(i), f(j)] \in X_1$}\\
      \Id_{E_{f(i)}} = \Id_{E_{f(j)}}  & \text{otherwise.}
    \end{cases}
  \]
  There is an evident map of discrete vector bundles with connection $\bar{f}\colon (f^*E,f^*\nabla) \to (E,\nabla)$ covering $f\colon Y\to X$ defined fiberwise by $\bar{f}_i = \bar{f}\vert_{E_i}\colon (f^*E)_i \to E_{f(i)}$ as the identity map. A pullback covering an isomorphism of simplicial complexes is called an \emph{isomorphism of discrete bundles with connection}, and an isomorphism cover the identity map on $X$ is called an \emph{automorphism} of $(E,\nabla) \to X$. If a choice of gauge has been fixed, then an automorphism is also called a \emph{gauge transformation}, specified by an element of $\GL_n$ at each vertex. 
\end{definition}

The pullback in discrete vector bundles satisfies the analogous universal property of the pullback of vector bundles over smooth manifolds; the proof is straightforward and left to the reader. 

\begin{proposition}
  Given $(E^\prime,\nabla^\prime) \to Y$ and $(E,\nabla) \to X$ and  $(E^\prime,\nabla)\to (E,\nabla)$ a map of discrete vector bundles covering simplicial map $f: Y\to X$ there is a unique map $(E',\nabla^\prime)\to (f^*E,f^*\nabla')$ over identity map of $Y$.
\end{proposition}

The above universal property uniquely characterizes the pullback $f^*(E,\nabla)$ up to unique isomorphism. This allows one to verify many of the standard properties of pullbacks using the same arguments as for vector bundles on smooth manifolds. For example, for maps $f\colon Y\to X$, $g\colon Z\to Y$ and $(E,\nabla)$ over $X$ there is a unique isomorphism between $g^*f^*(E,\nabla)$ and~$(f\circ g)^*(E,\nabla)$ over $Z$. 

We define pullbacks of $E$-valued cochains as the cochain dual of the corresponding chain map.
\begin{definition} \label{def:pullback-cochain}
  Let $Y$ and $X$ be locally ordered simplicial complexes, $f\colon  Y\to X$ an order-preserving abstract simplicial map, and $\alpha\in C^k(X; E)$. Then the \emph{pullback of $\alpha$}, denoted $f^*\alpha$, is the $f^*E$-valued cochain defined by:
  \[
    \eval{f^*\alpha}{\simplex{k}}_0 \defeq
    \begin{cases}
      \big\langle\alpha,\, f(\simplex{k})\big\rangle_{f(0)} &
      \text{if $f(\simplex{k})$ is a $k$-simplex in $Y$}.\\
      0 & \text{otherwise}\, .
    \end{cases}
  \]
\end{definition}

We remind that $f(0)$ is the origin vertex of $f(\simplex{k})$ since $f$ is order-preserving. 

\subsection{Discretization of smooth connections}\label{sec:form-discretization}
To provide a link with smooth geometry we provide a choice of discretization scheme that can be applied to a smooth vector bundle with connection $(E,\nabla)\to M$, and $X$ an ordered simplicial complex gotten from a (curvilinear) triangulation of the smooth manifold~$M$. This scheme is by no means unique, as we comment below. 

\begin{Construction}[Discretizing a smooth connection]\label{construction1}
With input data $(E,\nabla)\to M$ and $X$ as above, define a discrete vector bundle with connection over~$X$ as follows. For each vertex $i\in X_0$ assign the fiber $E_i$ of $E$, identifying the vertex with the corresponding point of $M$. For each edge $[i,j]\in X_1$, define $U_{ji}\colon E_i\to E_j$ as parallel transport along the associated path in $M$.

The above assignment is (contravariantly) functorial: for a smooth map $f\colon N\to M$ that induces a map $Y\to X$ of triangulations, there is a canonically induced map of discrete vector bundles with connection: $f^*$ determines a linear isomorphisms on the fibers denoted $\bar{f}_i\colon (f^*E)_i\to E_{f(i)}$, and the naturality of parallel transport in smooth geometry guarantees the equality~$U_{f(j)f(i)}=\bar{f}_j\circ U_{ji}\circ \bar{f}^{-1}_i$.
\end{Construction}

\begin{remark}
    In the above approach, the discrete covariant derivative~\eqref{eq:connection} is the forward difference approximation to the smooth covariant derivative operator.
\end{remark}

Now we turn to discretization of $E$-valued $k$-forms. In contrast to the de~Rham map for ordinary smooth differential forms, for a (nontrivial) bundle $E$ there is no canonically defined integral of $\hat{\alpha}\in \Omega^k(M;E)$ over $k$-dimensional domains in~$M$. Instead, one can integrate $\hat{\alpha}$ in any local trivialization of $E$; the resulting value will depend on the choice of trivialization.

\begin{Construction}[Discretizing bundle-valued $k$-forms]\label{construction2}
For a $k$-simplex $\mathbb{R}^k \supseteq \sigma \hookrightarrow M$ with origin vertex $l$, the linear contracting homotopy from $\sigma$ to $l$ gives a preferred trivialization $E\vert_\sigma \simeq \sigma \times E_l$ over~$\sigma$ with trivializing fiber $E_l$. This permits the definition
\begin{equation}\label{eq:discretize}
\eval{\alpha}{\sigma}_l :=\int_\sigma \hat{\alpha}\in E_l,
\end{equation}
of the discrete $k$-cochain $\alpha \in C^k(X; E)$ as the discretization of $\hat{\alpha}$. 
A sophisticated and careful generalization of this idea is carried out in~\cite{BrToGaDe2024}.
\end{Construction}

\begin{remark}
There are two choices in the above construction: (i) an origin vertices~$l\in \sigma$ for each $\sigma\hookrightarrow M$ and (ii) the choice of linear contracting homotopy from $\sigma$ to $l$ in each simplex. The first choice is natural with respect to smooth maps $N\to M$ inducing ordered simplicial maps $Y\to X$. The second choice makes naturality more difficult in this context: an arbitrary smooth map need not be compatible with the linear contracting homotopy. Note however that the construction is natural for piecewise linear maps of ordered simplicial complexes. 
\end{remark}

\begin{remark}\label{rmk:curvature2}
We comment on our two main motivations for considering discretization in the above way. First, when the smooth bundle is flat, we recover the \v{C}ech complex with local coefficients in~$E$, and in turn a complex computing the $E$-twisted de~Rham cohomology of $M$, see~\S\ref{sec:flat}. Second, the discretization of the curvature 2-form in the above procedure leads to the integrals~\eqref{eq:Taylor}, which has a classical interpretation in term of holonomy. An a priori different construction of discrete curvature as a square of the discrete covariant derivative (see Definition~\ref{def:discrete-curvature}) turns out to give essentially the same result, modulo the stated error term; see Remark~\ref{rmk:holvscuvature}. In particular, the error goes to zero as the triangulation is refined. 
\end{remark}

We return to flat bundles and cohomology with local coefficients in~\S\ref{sec:flat}.

\begin{remark}
 Other discretization schemes exist, though all approaches depend on various choices, e.g.,~\cite{BrToGaDe2024} build on earlier versions of our paper and develop a scheme with certain convergence properties made possible by an inspired choice of trivialization and an alternation operation composed with our $d_\nabla$. At present, such alternation seems to obscure some of the discrete geometric structures. In the end, understanding which choice of discretization is ``best" will ultimately require a deeper understanding of the desired applications. With this in mind, for the remainder of the present paper we will focus on the combinatorial and algebraic properties of discrete vector bundle-valued cochains, leaving the questions of discretization schemes and their convergence properties to future work. 
\end{remark}

\begin{remark}
There are discrete vector bundles simplicial complexes that do not arise as the discretization of any smooth vector bundle with connection. For example, consider the real line bundle on the 2-simplex with parallel transports on edges given by $-1$, $+1$, and $+1$. If this line bundle was the discretization of a smooth bundle (where the outer loop bounds a disk), the curvature would be required to vanish. Equivalently, the product of parallel transports around the 2-simplex would need to be $+1$. On the other hand, the discrete vector bundle returns~$-1$ for this holonomy. There are important applications where such discrete bundles that do not arise as discretizations of smooth objects play a central role, e.g., in toric code~\cite{Kitaev2006}. These applications give another reason to focus on the discrete theory below. 
\end{remark}

\subsection{Cup Product and Naturality}\label{subsec:cup}

\begin{definition}\label{def:cup}
  Given a vector bundle valued cochain $\alpha \in C^k(X; E)$ and scalar-valued cochain $w \in C^l(X)$ their \emph{cup product} $\alpha \smile w\in C^{k+l}(X;E)$ is defined by its evaluation on a $(k+l)$-simplex at the origin vertex as
  \begin{equation} \label{eq:cup}
    \eval{\alpha \smile w}{\simplex{k+l}}_0 :=
    \eval{\alpha}{\simplex{k}}_0\, \eval{w}{\simplex[k]{k+l}}
  \end{equation}
\end{definition}

From the definition of the cup product and pullbacks one has the following naturality result.

\begin{proposition}[Naturality of cup product]\label{prop:cup-naturality}
    Given $X,Y$ locally ordered simplicial complexes, bundle $(E,\nabla)\to Y$, and abstract simplicial map $f\colon X\to Y$, for any $\alpha \in C^k(Y; E)$ and $w \in C^l(Y)$ and simplex $\simplex[0]{k+l}$ in $X$
    \begin{equation}\label{eq:cup-naturality}
        \eval{f^\ast(\alpha \smile w)}{\simplex[0]{k+l}}_{0}=
        \eval{f^\ast\alpha \smile f^\ast w}{\simplex[0]{k+l}}_{0}
    \end{equation} 
\end{proposition}

\begin{proof}
    Naturality follows since
    \begin{align*}
        \eval{f^\ast(\alpha \smile w)}{\simplex[0]{k+l}}_{0} &=
        \eval{\alpha \smile w}{\simplex[f(0)]{f(k+l)}}_{f(0)}\\
        &= \eval{\alpha}{\simplex[f(0)]{f(k)}}_{f(0)}\;\;
        \eval{w}{\simplex[f(k)]{f(k+l)}}\\
        &= \eval{f^\ast\alpha}{\simplex[0]{k}}_{0}\eval{f^\ast w}{\simplex[k]{k+l}}\\
        &= \eval{f^\ast\alpha \smile f^\ast w}{\simplex[0]{k+l}}_{0}\, .
    \end{align*}
    For dimensional reasons, both sides are 0 if the vertex map of $f$ is not a bijection.
\end{proof}

\begin{remark}
For scalar-valued discrete cochains there are two choices of product: the cup product as in~\eqref{eq:cup} or an (anti-symmetrized) wedge product~\cite{Hirani2003}. These cochain-level operations yield the same product on cohomology. In the bundle-valued setting, the anti-symmetrized wedge product is not well-defined, with an obstruction coming from the curvature, see~\S\ref{sec:crvtr_obstrctn}. In this sense, the cup product~\eqref{eq:cup} is the only viable one between scalar-valued and vector bundle-valued cochains. 
\end{remark}

\section{The discrete exterior covariant derivative}
\label{sec:dnabla}

Following the paradigm from the smooth setting~\eqref{eq:sequence}, we extend the discrete covariant derivative~\eqref{eq:connection} to a discrete exterior derivative on vector bundle-valued cochains. 

\begin{definition}
 Let $\alpha\in C^{k-1}(X; E)$ be a $(k-1)$-cochain. The discrete \emph{exterior covariant derivative} of $\alpha$ is the $k$-cochain $d_\nabla\alpha\in C^k(X;E)$ characterized by
  \begin{equation} \label{eq:dnabla}
    \eval{d_\nabla \alpha}{\simplex{k}}_0 \defeq U_{01} \eval{\alpha}{\simplex[1]{k}}_1+
    \sum_{i=1}^k (-1)^i \eval{\alpha}{\face{0}k}_0.
  \end{equation}
\end{definition}

\begin{lemma}
[Leibniz rule for sections]\label{prop:Leibniz1}
For any $f \in C^0(X)$, section $s \in C^0(X; E)$ and edge $[i,j]$ in $X$ with origin vertex $i$, the discrete covariant derivative satisfies the following Leibniz rule
  \begin{equation} \label{eq:nabla-Leibniz}
    \eval{\nabla (f\smile s)}{[i,j]}_i =
    \eval{d f\smile s+f \smile \nabla s}{[i,j]}_i\, .
  \end{equation}
\end{lemma}
\begin{proof}
  The left side of~\eqref{eq:nabla-Leibniz} is $U_{ij} ( f_j s_j) - f_i s_i$, whereas the right side is the sum of the terms
  \begin{align*}
    \eval{df \smile s}{[i,j]}_i &= \eval{df}{[i,j]} U_{ij} s_j = (f_j - f_i) U_{ij}s_j\\
    \eval{f\smile \nabla s}{[i,j]}_i &= f_i\eval{\nabla s}{[i,j]}_i = f_i(U_{ij} s_j - s_i). 
  \end{align*}
  The result follows. 
\end{proof}

\begin{proposition}[Leibniz rule]\label{prop:Leibniz}
  For $\alpha\in C^{k}(X; E)$ and $w \in C^l(X)$, the discrete covariant derivative $d_\nabla$ satisfies the following Leibniz rule 
  \begin{equation}\label{eq:Leibniz-cup}
    \eval{d_\nabla (\alpha\smile w)}{\simplex{k{+}l{+}1}}_0   =
    \eval{d_\nabla \alpha \smile w}{\simplex{k{+}l{+}1}}_0 +
    (-1)^k \eval{\alpha\smile dw}{\simplex{k{+}l{+}1}}_0.
  \end{equation} 
\end{proposition}
\begin{proof} By definition of $d_\nabla$, we have
  \begin{equation*}\label{eq:Leibniz-cup-step1}
    \eval{d_\nabla (\alpha\smile w)}{\simplex{k{+}l{+}1}}_0   = U_{01} \eval{\alpha\smile w}{\simplex[1]{k{+}l{+}1}}_1+
    \sum_{i=1}^{k{+}l{+}1} (-1)^i \eval{\alpha\smile w}{\face{0}{k{+}l{+}1}}_0\, 
  \end{equation*}
and evaluating the cup products above yields
  \begin{multline}\label{eq:Leibniz-cup-lhs}
    U_{01} \eval{\alpha}{\simplex[1]{k+1}}_1\; \eval{w}{\simplex[k{+}1]{k{+}l{+}1}}  + 
    \sum_{i=1}^{k} (-1)^i \eval{\alpha}{\face{0}{k{+}1}}_0\;
    \eval{ w}{\simplex[k{+}1]{k{+}l{+}1}} +\\
    \sum_{i=k{+}1}^{k{+}l{+}1} (-1)^i \eval{\alpha}{\simplex{k}}_0\; \eval{w}{\face{k}{k{+}l{+}1}}\, .
  \end{multline}
The first term on the right side of~\eqref{eq:Leibniz-cup} expands to
  \[
    U_{01}\eval{\alpha}{\simplex[1]{k{+}1}}_1 \; \eval{w}{\simplex[k+1]{k{+}l{+}1}}
    +\sum_{i=1}^{k+1} (-1)^i\eval{\alpha}{\face{0}{k{+}1}}_0\; \eval{w}{\simplex[k{+}1]{k{+}l{+}1}}\, .
  \]
  In preparation for a cancellation we will separate out the last term in the summation above, which yields
  \begin{multline}\label{eq:Leibniz-cup-rhs1}
    U_{01}\eval{\alpha}{\simplex[1]{k{+}1}}_1 \; \eval{w}{\simplex[k{+}1]{k{+}l{+}1}}\; +\\
    \sum_{i=1}^k(-1)^i\eval{\alpha}{\face{0}{k{+}1}}_0\; \eval{ w}{\simplex[k{+}1]{k{+}l{+}1}}
     +(-1)^{k+1}\eval{\alpha}{\simplex{k}}_0\; \eval{ w}{\simplex[k{+}1]{k{+}l{+}1}}\, .
  \end{multline}
The second term of the right hand side of~\eqref{eq:Leibniz-cup} expands to
  \[
    (-1)^k\eval{\alpha}{\simplex{k}}_0\; \eval{dw}{\simplex[k]{k{+}l{+}1}} =
    (-1)^k\eval{\alpha}{\simplex{k}}_0\; \sum_{i=k}^{k+l+1}(-1)^{(i-k)} \eval{w}{\face{k}{k{+}l{+}1}}\, .
  \]
Separating out the first term in the summation above yields
\begin{multline}\label{eq:Leibniz-cup-rhs2}
  (-1)^k\eval{\alpha}{\simplex{k}}_0\; (-1)^0\eval{w}{\simplex[k{+}1]{k{+}l{+}1}} + \\
  (-1)^k \sum_{i=k+1}^{k+l+1}\eval{\alpha}{\simplex{k}}_0\; (-1)^{(i-k)}
\eval{w}{\face{k}{k{+}l{+}1}}\, .
\end{multline}
On adding~\eqref{eq:Leibniz-cup-rhs1} and~\eqref{eq:Leibniz-cup-rhs2} the last term  in~\eqref{eq:Leibniz-cup-rhs1} and the first term in~\eqref{eq:Leibniz-cup-rhs2} cancel and the result is ~\eqref{eq:Leibniz-cup-lhs}. 
\end{proof}

\begin{proposition}[Naturality of $d_\nabla$] \label{prop:dnaturality}
    Let $X,Y$ be ordered simplicial complexes, $f: X \to Y$ an order preserving abstract simplicial map, and $E\to Y$ a discrete vector bundle with connection. Then for any $\alpha\in C^k(Y; E)$ and $(k+1)$-simplex $\simplex{k+1}$ in $X$:
    \begin{equation} \label{eq:dnaturality}
        \eval{f^\ast d_\nabla\alpha}{\simplex{k+1}}_0 =
        \eval{d_\nabla f^\ast\alpha}{\simplex{k+1}}_0\, .
    \end{equation}
\end{proposition}
\begin{proof}
    By definition, 
    \begin{equation}\label{eq:dnaturality-rhs}
        \eval{d_\nabla f^\ast\alpha}{\simplex{k+1}}_0=U_{f(0),f(1)}\Eval{\alpha}{f\big(\simplex[1]{k+1}\big)}_{f(1)} +
    \sum_{j=1}^{k+1}(-1)^j\; 
    \Eval{\alpha}{f\big([0\ldots\widehat{j}\ldots k+1]\big)}_{f(0)}\, .
    \end{equation}
    Let $f(\simplex{k+1})$ be an $l$-simplex in $Y$. If $l=k+1$,~\eqref{eq:dnaturality} follows by definition since $i\mapsto i$ as $f$ is order preserving. For $l<k$, $f^*\alpha$ evaluates to 0 for all terms in~\eqref{eq:dnaturality-rhs}. The remaining case is $l=k$. In this case due to order preservation two consecutive ordered vertices $i,i+1$ must map to a single vertex $i$ or $i+1$. For both cases, if $i=0$ then the first term in~\eqref{eq:dnaturality-rhs} cancels with the $j=1$ term and other terms are 0 due to a repeated vertex. For $i\ge 1$, whether $i,i+1\mapsto i,i$ or $i,i+1\mapsto i+1,i+1$, all terms in~\eqref{eq:dnaturality-rhs} are 0 except $j=i$ and $j=i+1$. These two terms are equal but with opposite signs and hence cancel.
\end{proof}

\section{Curvature as a homomorphism valued cochain} \label{sec:curvature}

For $(E,\nabla) \to M$ a smooth vector bundle with connection, we recall that the operator $d_\nabla^2 = d_\nabla\circ d_\nabla$ is $C^\infty(M)$-linear, and hence can be identified with an endomorphism-valued 2-form $R$, affording one of the smooth definitions of curvature. We mimic this approach in the discrete setting, which first requires that we make sense of the discrete analog of $d_\nabla^2$. As we shall see, $d_\nabla^2$ is naturally a linear map between \emph{pairs} of fibers, which we formalize as a \emph{homomorphism-valued $2$-cochain}. This ``spreading out'' is a common phenomenon when discretizing smooth objects. 

\subsection{Curvature as a homomorphism-valued 2-cochain}

\begin{definition}\label{def:hom}
  A \emph{homomorphism-valued} $k$-cochain assigns to each $k$-simplex $\simplex{k}$ a linear map $E_k \to E_0$ from the highest to the lowest vertex. The space of homomorphism-valued $k$-cochains is denoted $C^k(X; \Hom(E))$. Given $B \in C^k(X; \Hom(E))$ and $\alpha\in C^l(X; E)$ the action of $B$ on $\alpha$ is $B\smile \alpha$ defined by:
  \begin{equation} \label{eq:homAction}
    \eval{B\; \smile \alpha}{\simplex{k+l}}_0 := \eval{B}{\simplex{k}}_{0,k} \; \eval{\alpha}{\simplex[k]{k+l}}_k\;.
  \end{equation}
\end{definition}
The subscript in $\eval{B}{\simplex{k}}_{0,k}$ indicates that the value is a map $E_k\to E_0$.

\begin{remark}\label{rem:why-no-homE}
    In Definition~\ref{def:hom} we have used $\Hom(E)$ without defining it. We could have defined a vector bundle $\Hom(E)$ over $X$ consisting of the data of a vector space of all linear maps $E_k\to E_0$ for each simplex $\simplex{k}$. Then a homomorphism valued $k$-cochain would have been a section of such a bundle. This is different from the situation of endomorphism-valued bundle $\End(E)$ in the smooth setting because endomorphisms are linear maps $E_p\to E_p$ for each $p\in M$. We will use $\Hom(E)$ just as a notation in the definition of cochains.
\end{remark}

\begin{proposition}\label{prop:dnabla2-in-Hom}
    For $(E,\nabla) \to X$ a discrete vector bundle with connection, the square of the discrete covariant derivative is a homomorphism valued 2-cochain, $d_\nabla^2 = d_\nabla \circ d_\nabla\in C^2(X; \Hom(E))$.
\end{proposition}
\begin{proof}
    Let $s \in C^0(X;E)$. Then $d_\nabla^2 s = d_\nabla \nabla s$ on triangle $[0,1,2]$ is 
    \begin{align*}
        \eval{d_\nabla d_\nabla s}{[0,1,2]}_0 &= 
        U_{01} \eval{d_\nabla s}{[1,2]}_1 - \eval{d_\nabla s}{[0,2]}_0 + \eval{d_\nabla s}{[0,1]}_0 \\
        &= U_{01}(U_{12} s_2 - s_1) - (U_{02} s_2 - s_0) + (U_{01} s_1 - s_0) \\
        &= (U_{01} U_{12} - U_{02}) s_2\, .
    \end{align*}
Thus $d_\nabla^2 \in C^2(X;\Hom(E))$ since its value on $[0,1,2]$ is a linear map $E_2 \to E_0$.
\end{proof}

\begin{definition}\label{def:discrete-curvature}
  Given $(E,\nabla)\to X$ a discrete vector bundle with connection, the \emph{discrete curvature} is defined as $F_\nabla := d_\nabla^2 \in C^2(X;\Hom(E))$. As computed above, the discrete curvature takes values
  \begin{equation}\label{eq:curvature}
    \eval{F_\nabla}{[0,1,2]}_{0,2}= U_{01}U_{12} - U_{02}\, 
  \end{equation}
  which we often denote by $F_{012}:=\eval{F_\nabla}{[0,1,2]}$.
\end{definition}

\begin{remark}\label{rmk:holvscuvature}
One can always arrange a gauge transformation so that $U_{20}$ is the identity matrix. Hence,~\eqref{eq:curvature} recovers the common ``holonomy minus identity'' definition of curvature in (Wilson's) discrete gauge theory. Furthermore, up to this choice of gauge the quantity~\eqref{eq:curvature} agrees with the leading term in the Taylor approximation to the integral of curvature, see Remarks~\ref{rem:taylor} and~\ref{rmk:curvature2}. 
\end{remark}

\begin{remark}
    By a coarsening procedure detailed in~\S\ref{sec:coarse}, discrete curvature will be associated with codimension 2 cells. This agrees with the notion in discrete differential geometry of surfaces~\cite{Kourimska2020} and Regge calculus~\cite{Regge1961}.
\end{remark}

\begin{definition}\label{def:dnabla-hom}
    For $(E,\nabla)\to X$ a discrete vector bundle with connection, the discrete \emph{induced exterior covariant derivative} on homomorphism-valued k-cochains $d_{\nabla^\Hom}\colon C^k(X;\Hom(E)) \to C^{k+1}(X; \Hom(E))$ is characterized by demanding Leibniz rule hold:
    \begin{equation}\label{eq:Hom-Leibniz}
      d_\nabla (B\smile \alpha) = (d_{\nabla^\Hom} B)\smile \alpha + (-1)^k B \smile d_\nabla \alpha\, ,  
    \end{equation}
    for $B\in C^k(X;\Hom(E))$ and $\alpha\in C^l(X;E)$. When applied to degree zero cochains, we use the notation $\nabla^\Hom\colon C^0(X;\Hom(E))\to C^1(X;\Hom(E))$ 
\end{definition}
\begin{proposition}\label{prop:dnabla-hom}
  The homomorphism-valued $(k+1)$-cochain $d_{\nabla^\Hom} B \in C^{k+1}(X;\Hom(E))$ takes values 
  \begin{multline}\label{eq:dnabla-hom}
    \eval{d_{\nabla^\Hom} B}{\simplex{k+1}}_{0,k+1} \defeq U_{01} \eval{B}{\simplex[1]{k+1}}_{1,k+1} +\sum_{i=1}^{k} \left[(-1)^i \eval{B}{\face{0}{k+1}}_{0,k+1}\right]+\\
    (-1)^{k+1} \eval{B}{\simplex{k}}_{0,k}\, U_{k,(k+1)}\, .
  \end{multline}
\end{proposition}
\begin{proof}
    The proof is a straightforward application of Definition~\ref{def:dnabla-hom} and is left to the reader.
\end{proof}

\begin{remark} 
    The above formula is entirely analogous to the discrete $d_\nabla$ applied to vector bundle-valued $k$-cochains, except for the modification in the last term. The necessity of this modification comes from tracking the source and target of the homomorphism-valued cochain: $\eval{B}{\simplex[1]{k+1}}_{1,k+1}$ and  $\eval{B}{\face{0}{k+1}}_{0,k+1}$ are maps from $E_{k+1}$ whereas $\eval{B}{\simplex{k}}_{0,k}$ is a map from~$E_k$. Thus, a transport from $k+1$ to $k$ is needed in the last term in~\eqref{eq:dnabla-hom}. Definition~\ref{def:dnabla-hom} correctly produces this modification. It is reassuring but not surprising that this required modification falls out of requiring that the Leibniz rule~\eqref{eq:Hom-Leibniz} hold.
\end{remark}

In the smooth setting, applying the exterior covariant derivative twice is the same as applying the operator corresponding to the curvature. The analogous result holds in the discrete setting.

\begin{proposition}\label{prop:dnabla2-alpha}
For a vector bundle-valued discrete $k$-cochain $\alpha \in C^k(X; E)$, we have
  \begin{equation} \label{eq:dnabla2isF}
  d_\nabla(d_\nabla(\alpha))=F_\nabla \smile \alpha.   
  \end{equation}
\end{proposition}
\begin{proof}
The statement is equivalent to the equality of values on $(k+2)$-simplices
\[
\eval{d_\nabla d_\nabla \alpha}{\simplex{k+2}}_0  = \eval{F \smile \alpha}{\simplex{k+2}}_0\, .
\]
We start by unpacking the left hand side as 
  \begin{equation} \label{eq:dnabla2LHS}
    \eval{d_\nabla d_\nabla \alpha}{\simplex{k+2}}_0
    = U_{01} \eval{d_\nabla \alpha}{\simplex[1,2]{k+2}}_1 +
    \sum_{i=1}^{k+2}(-1)^i \eval{d_\nabla \alpha}{\face{0,1}{k+2}}_0\, 
  \end{equation}
  and further expanding the first term on the right side of~\eqref{eq:dnabla2LHS},
  \begin{equation}\label{eq:simple-term}
    U_{01}\left(U_{12}\eval{\alpha}{\simplex[2]{k+2}}_2 +
      \sum_{i=2}^{k+2}(-1)^{i-1}\eval{\alpha}{\face{1,2}{k+2}}_1\right)\, .
  \end{equation}
  Separating the first term from the summation term in right side of~\eqref{eq:dnabla2LHS} we have
  \begin{equation} \label{eq:summation}
    (-1) \eval{d_\nabla \alpha}{\simplex[0,2]{k+2}}_0 +
    \sum_{i=2}^{k+2}(-1)^i \eval{d_\nabla \alpha}{\face{0,1}{k+2}}_0\, 
  \end{equation}
  and then using the definition of $d_\nabla$ for the first term in~\eqref{eq:summation} we have
  \begin{equation}\label{eq:U02term}
    (-1) U_{02}\eval{\alpha}{\simplex[2]{k+2}}_2 -
    \sum_{i=2}^{k+2}(-1)^{i-1}\eval{\alpha}{\face{0,2}{k+1}}_0\, .
  \end{equation}
  Expanding $d_\nabla$ in the second term in~\eqref{eq:summation} yields
  \begin{multline}\label{eq:summation-expanded}
    U_{01}\sum_{i=2}^{k+2}(-1)^i \eval{\alpha}{\face{1,2}{k+2}}_1 +\\
    \sum_{i=2}^{k+2}\left(
      \sum_{j=1}^{i-1}(-1)^{i+j}  
      \eval{\alpha}{\face[\hat{j} ... \hat{i}]{0,1,2}{k+2}}_0\right.
    \left.+\sum_{j=i+1}^{k+2}(-1)^{i+j-1}
      \eval{\alpha}{\face[\hat{i}...\hat{j}]{0,1,2}{k+2}}_0\right)\, .
  \end{multline}
  Thus the LHS of~\eqref{eq:dnabla2isF} is the sum of~\eqref{eq:simple-term},~\eqref{eq:U02term} and~\eqref{eq:summation-expanded}. The first term in~\eqref{eq:simple-term} and the first term in~\eqref{eq:U02term} combine to give the curvature $F$:
  \begin{align*}
   U_{01}U_{12}\eval{\alpha}{\simplex[2]{k+2}}_2 - U_{02}\eval{\alpha}{\simplex[2]{k+2}}_2 &= \eval{F}{[0,1,2]}_{0,2}\; \eval{\alpha}{\simplex[2]{k+2}}_2 \\
   &= \eval{F\smile\alpha}{\simplex[0]{k+2}}_0\;.   
  \end{align*}
The summation term of~\eqref{eq:simple-term} cancels the first summation term of~\eqref{eq:summation-expanded}. The terms that remain unaccounted for are the summation term in~\eqref{eq:U02term} and the double summation terms in~\eqref{eq:summation-expanded}. The $j=1$ term in the first double sum in~\eqref{eq:summation-expanded} is
  $\sum_{i=2}^{k+2} (-1)^{i+1}\eval{\alpha}{\face{0,2}{k+2}}_0$ which cancels with the second term in~\eqref{eq:U02term}. A simple re-indexing argument verifies that
  \begin{equation}\nonumber
    \sum_{i=2}^{k+2}\left(
      \sum_{j=2 }^{i-1}(-1)^{i+j}  
      \eval{\alpha}{\face[\hat{j} ... \hat{i}]{0,1,2}{k+2}}_0\right.
    \left.+\sum_{j=i+1}^{k+2}(-1)^{i+j-1}
      \eval{\alpha}{\face[\hat{i}...\hat{j}]{0,1,2}{k+2}}_0\right) = 0\, .
  \end{equation}
  The result is proved. 
  \end{proof}

\begin{proposition}[Discrete Bianchi identity] \label{prop:bianchi}
  The discrete curvature satisfies the Bianchi identity
  \[
  d_{\nabla^\Hom}\, F = 0\, .
  \]
\end{proposition}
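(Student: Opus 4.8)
The plan is a direct term-by-term computation, since $d_\nabla F$ is a homomorphism-valued $3$-cochain and it suffices to check that it vanishes on each $3$-simplex. I would fix a $3$-simplex, which we may label $[0123]$, and apply Definition~\ref{defn:dnabla-hom} with $k = 2$ to get
\[
\eval{d_\nabla F}{[0123]}_0 = U_{01}\,\eval{F}{[123]}_1 - \eval{F}{[023]}_0 + \eval{F}{[013]}_0 - \eval{F}{[012]}_0\,U_{23},
\]
where the trailing $U_{23}$ is the transport appearing in the last term of Definition~\ref{defn:dnabla-hom}, inserted so that $\eval{F}{[012]}\colon E_2 \to E_0$ becomes a map $E_3 \to E_0$; each of the four summands is then a linear map $E_3 \to E_0$.

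Next I would substitute $\eval{F}{[ijk]} = U_{ij}U_{jk} - U_{ik}$ from Definition~\ref{defn:dscrtCrvtr2} into each of the four faces and expand:
\begin{align*}
U_{01}\,\eval{F}{[123]}_1 &= U_{01}U_{12}U_{23} - U_{01}U_{13},\\
-\eval{F}{[023]}_0 &= -U_{02}U_{23} + U_{03},\\
\eval{F}{[013]}_0 &= U_{01}U_{13} - U_{03},\\
-\eval{F}{[012]}_0\,U_{23} &= -U_{01}U_{12}U_{23} + U_{02}U_{23}.
\end{align*}
Summing, the eight terms cancel in pairs: $U_{01}U_{12}U_{23}$ with $-U_{01}U_{12}U_{23}$, $-U_{01}U_{13}$ with $U_{01}U_{13}$, $-U_{02}U_{23}$ with $U_{02}U_{23}$, and $U_{03}$ with $-U_{03}$. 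Hence $\eval{d_\nabla F}{[0123]}_0 = 0$, and since the vertex labeling was arbitrary this holds on every $3$-simplex, giving $d_\nabla F = 0$.

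I do not expect a genuine obstacle here; as the remark preceding the statement indicates, this is essentially a bookkeeping exercise. The only point requiring care is tracking, for each of the four faces of the tetrahedron, which fiber the relevant composite maps out of and into, so that the extra transport $U_{23}$ in the last term of Definition~\ref{defn:dnabla-hom} is placed correctly; once all four contributions are written with matching domains and codomains the cancellation is forced. (Conceptually this is the discrete analogue of the "associativity" identity for $d_\nabla$ applied to a section together with Proposition~\ref{prop:dnabla2}, but deriving it that way would require a separate Leibniz rule for the action of homomorphism-valued cochains on sections, so the explicit computation above is the most economical route.)
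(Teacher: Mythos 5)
Your computation is correct and is essentially identical to the paper's proof: both apply Definition~\ref{defn:dnabla-hom} with $k=2$ to a tetrahedron $[0123]$, obtain $U_{01}F_{123}-F_{023}+F_{013}-F_{012}U_{23}$, substitute $F_{ijk}=U_{ij}U_{jk}-U_{ik}$, and observe the pairwise cancellation. No gaps; the placement of the trailing transport $U_{23}$ matches the paper exactly.
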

\begin{proof}
  Consider a tetrahedron $[0,1,2,3]$. Then
  \begin{align*}
    \eval{d_{\nabla^\Hom}\, F}{[0,1,2,3]}_{0,3} &= U_{01} F_{123} - F_{023} +  F_{013}- F_{012} U_{23}\\
    &= U_{01}(U_{12} U_{23}- U_{13})  -(U_{02}U_{23}- U_{03}) + 
        (U_{01} U_{13}- U_{03}) - (U_{01} U_{12}- U_{02}) U_{23}\\
        &= 0,
  \end{align*}
  and the result follows. 
\end{proof}

\begin{remark}
  Similar combinatorial expressions of the Bianchi identity have been observed perviously, e.g.,~\cite{Kock1996,MiThWh1973,ChHu2023}. All of these formulas can be understood the combinatorics of curvature integrated over a 3-dimensional region together with the relationship between infinitesimal holonomy and curvature~\eqref{eq:Taylor}, e.g., see \cite[page 256]{BaMu1994}.
\end{remark}

\subsection{Curvature obstruction to anti-symmetrization of cup product}

\label{sec:crvtr_obstrctn}
The wedge product in DEC is defined as an anti-symmetrized cup product~\cite{Hirani2003}. Anti-symmetrization of the cup product for vector bundle-valued cochains (see~\S\ref{subsec:cup}) produces curvature terms that would break the Leibniz rule for the exterior covariant derivative, as we now explain. 

Let $X$ be the simplicial complex of tetrahedron $[0,1,2,3]$, $\alpha \in C^1(X; E)$ and $w \in C^1(X)$. Defining the wedge product $\alpha \wedge w$ as the anti-symmetrization of the cup product produces a sum over terms of the form $\sgn(\pi)\, \eval{d_\nabla(\alpha \smile w)}{[\pi(0),\pi(1),\pi(2),\pi(3)]}$ for all $\pi \in S_4$. For half of these terms a curvature obstruction appears that prevents Leibniz rule for being satisfied for that permutation. (In the following computation we use the shorthand $\alpha_{ij} = \eval{\alpha}{[i,j]}_i$ and $w_{ij} = \eval{w}{[i,j]}$.) For example, consider the permutation $(0,1,2,3)\mapsto (0,2,3,1)$. Then
  \begin{equation}\label{eq:crvtranti}
    \eval{d_\nabla(\alpha \smile w)}{[0,2,3,1]}_0 = \eval{d_\nabla\alpha \smile w}{[0,2,3,1]}_0 -
    \eval{\alpha \smile dw}{[0,2,3,1]}_0\, ,
  \end{equation}
  if and only if $U_{01}U_{12} = U_{02}$ which is equivalent to vanishing curvature in $[0,1,2]$. To see this note that LHS of\eqref{eq:crvtranti} is
  \begin{align}\label{eq:crvtrantiLHS}
    U_{01}\eval{\alpha\smile w}{[2,3,1]}_1 &- \eval{\alpha\smile w}{[0,3,1]}_0 +\eval{\alpha\smile w}{[0,2,1]}_0 - \eval{\alpha\smile w}{[0,2,3]}_0 = \\
    &U_{01}U_{12} \alpha_{23} w_{31} - \alpha_{03}w_{31} + \alpha_{02}w_{21} - \alpha_{02}w_{23}\, ,\notag
\end{align}
and the RHS of~\eqref{eq:crvtranti} is
\begin{align}\label{eq:crvtrantiRHS}
  \eval{d_\nabla\alpha}{[0,2,3]}_0 w_{31} &- \alpha_{02}\eval{dw}{[2,3,1]} =\\
  &U_{02}\alpha_{23}w_{31} - \alpha_{03}w_{31} + \alpha_{02}w_{31}-\alpha_{02}w_{31} + \alpha_{02}w_{21} - \alpha_{02}w_{23}\, .\notag
\end{align}
The RHS of~\eqref{eq:crvtrantiLHS} and~\eqref{eq:crvtrantiRHS} are equal iff $U_{01}U_{12}\alpha_{23}w_{31} = U_{02}\alpha_{23} w_{31}$.

A straightforward but tedious check shows that when the terms corresponding to the evaluations on other permutations of $(0,1,2,3)$ are considered together with signs there is no global cancellation in the antisymmetrization that would yield a Leibniz rule free of curvature obstruction. Thus in order for the Leibniz rule to hold, we are forced to use the cup product in Definition~\ref{def:cup} instead of a discrete wedge product.

\subsection{Connection 1-cochains and gauge transformations}\label{sec:connection-1form}
Following~\eqref{eq:smoothA}, in the smooth setting connections can be described (locally) by endomorphism-valued 1-forms. An analogous statement holds in the discrete setting. 

\begin{lemma}
Let $\nabla$ and $\nabla'$ be two (different) discrete connections on a discrete vector bundle $E\to X$. Then the difference
\[
\nabla-\nabla'\in C^1(X;E)
\]
defines a homomorphism-valued 1-cochain.
\end{lemma}

\begin{proof}
For $s\in C^0(X;E)$, we compute the evaluation on an edge as
\[
\Eval{(\nabla-\nabla')(s)}{[0,1]}_0 = (U_{01}-U'_{01}) s_1\, .
\]
We conclude $\nabla-\nabla' = U_{01} - U'_{01} \in C^1(X;\Hom(E))$
\end{proof}

For a discrete vector bundle $E$, denote $d_E$ denote the \emph{trivial connection}, meaning the parallel transport matrices defining $d_E$ are all identity maps. 

\begin{definition}\label{def:connection-1form}
For $(E,\nabla)$ a discrete vector bundle with connection, the \emph{discrete connection 1-cochain} is $A:=\nabla-d_E \in C^1(X;\Hom(E))$. 
We use the notation $A_{ij}:= \eval{A}{[i,j]}_{i,j} = U_{ij} - \Id_{ij}$ to denote the value of $A$ on an edge.
\end{definition}

The discrete analog of the expression for curvature in terms of the connection 1-form (see~\eqref{eq:smoothA}) requires the following product of homomorphism-valued forms.

\begin{definition}\label{def:B-cup-D}
    Given $B\in C^k(X;\Hom(E))$ and $D \in C^l(X;\Hom(E))$ the cup product of the two is $B\smile D \in C^{k+l}(X;\Hom(E))$ defined by its evaluation on $\simplex{k+l}$ as
    \begin{equation}\label{eq:b-cup-D}
    \eval{B\smile D}{\simplex{k+1}}_{0,k+1}:= \eval{B}{\simplex{k}}_{0,k}\;\eval{D}{\simplex[k]{k+l}}_{k,k+l}\, 
    \end{equation}
where the right hand side is a composition of linear maps. 
\end{definition}

\begin{lemma}\label{lem:F-using-A}
Given $(E,\nabla) \to X$ a discrete vector bundles with connection, discrete curvature $F_\nabla\in C^2(X;\Hom(E))$, and discrete connection 1-cochain $A \in C^1(X;\Hom(E))$, we have the equality 
\begin{equation}\label{eq:F-using-A}
    F_\nabla = d_E A + A\smile A\in C^2(X;\Hom(E))\, ,
\end{equation}
of homomorphism-valued 2-cochains. 
\end{lemma}
\begin{proof}
As usual, we verify~\eqref{eq:F-using-A} by evaluation on 2-simplices. Using~\eqref{eq:dnabla-hom},
\begin{align*}
    \eval{d_EA}{[0,1,2]}_{0,2} &= \Id_{01}\eval{A}{[1,2]}_{1,2} - \eval{A}{[0,2]}_{0,2} + \eval{A}{[0,1]}_{0,1}\Id_{12}\\
    &= \Id_{01}\, A_{12} - A_{02} + A_{01}\, \Id_{12}\, .
\end{align*}
The right hand side of~\eqref{eq:F-using-A} then evaluates to 
\begin{align*}
    \eval{d_EA}{[0,1,2]}_{0,2} + \eval{A}{[0,1]}_{0,1}\,\eval{A}{[1,2]}_{1,2}
    &=\Id_{01}\, A_{12} - A_{02} + A_{01}\, \Id_{12} + A_{01}\, A_{12}\, .
\end{align*}
Substituting $A_{ij}=U_{ij}-\Id_{ij}$ above, we obtain 
\begin{align*}
  &\Id_{01}(U_{12}-\Id_{12}) - (U_{02}-\Id_{02}) +(U_{01}-\Id_{01})\Id_{12} +     (U_{01}-\Id_{01})(U_{12}-\Id_{12})\\
  &=\Id_{01}U_{12}-\Id_{01}\Id_{12}-U_{02}+\Id_{02} +U_{01}\Id_{12} - \Id_{01}\Id_{12} + U_{01}U_{12} - \Id_{01}U_{12} - U_{01}\Id_{12} + \Id_{01}\Id_{12}\, .
\end{align*}
Then since $\Id_{01}\Id_{12} = \Id_{02}$ all terms above cancel except $U_{01}U_{02}-U_{02}$. But this is precisely $\eval{F}{[0,1,2]}_{0,2}$, the left side of~\eqref{eq:F-using-A}.
\end{proof}

The gauge transformations defined in \S\ref{sec:dscrtvctrbndls} can be recast as homomorphism-valued 0-cochains, $g\in C^0(X;\Hom(E))$. 
The classical formula~\eqref{eq:gaugetransf} for the image of a connection 1-form under a gauge transformation has the following discrete analog. 

\begin{proposition}
Given a gauge transformation $g \in C^0(X; \Hom(E))$ between discrete bundles with connection $(E,\nabla)\mapsto (E,\nabla')$, the corresponding connection 1-forms and curvature are related by 
\[
A\mapsto A'=g\,A\,g^{-1} - dg\, g^{-1},\quad F'=gFg^{-1}. 
\]
\end{proposition}
\begin{proof}
    Starting with the connection 1-forms, we evaluate both sides on edge 
    \begin{align*}
        \Eval{gAg^{-1}-dgg^{-1}}{[0,1]}&=\eval{g \smile A \smile g^{-1}}{[0,1]}_{0,1} - 
        \eval{dg\smile g^{-1}}{[0,1]}_{0,1} \\&= 
        g_0A_{01}g_1^{-1} - (\Id_{01}g_1 - g_0\Id_{01})g_1^{-1}\\
        &=g_0A_{01}g_1^{-1} - \Id_{01} + g_0\Id_{01}g_1^{-1}\\ 
        &=g_0U_{01}g_1^{-1} - g_0\Id_{01}g_1^{-1} - \Id_{01} + g_0\Id_{01}g_1^{-1}\, \\
        &=\eval{A'}{[0,1]} 
    \end{align*}
as desired. The expression for curvature follows from the above, or can be computed directy by the values $\eval{F}{[0,1,2]}_{0,2} = F_{012} \mapsto g_0 F_{012}g_2^{-1}$
\end{proof}

\section{Flat bundles and cohomology with local coefficients}\label{sec:flat}

\begin{definition}
A discrete vector bundle with connection is \emph{flat} if its curvature vanishes. 
\end{definition}

In view of Definition~\ref{def:discrete-curvature}, the flatness condition turns $(C^\bullet(X;E),d_\nabla)$ into a cochain complex. Below we identify the cohomology of this complex with cohomology in a certain local coefficient system on~$X$ built from a discrete vector bundle and (flat) discrete connection. 

\subsection{Local coefficients from flat bundles}
A \emph{local coefficient system} is a \emph{locally constant sheaf}, i.e., a sheaf that once restricted to sufficiently small open sets is isomorphic to a constant sheaf. All sheaves below are valued in real vector spaces, or equivalently, local coefficients are~$\R$-modules. 

\begin{lemma}\label{lem:construction}
On a space $X$, a locally constant sheaf $\mathcal{E}$ can be specified (up to canonical isomorphism) by the following data: (i) a good open cover $\{\mathcal{U}_i\}$ of $X$, (ii) vector spaces $\mathcal{E}_i$ for each open $\mathcal{U}_i\subset X$, and (iii) isomorphisms of vector spaces $\varphi_{ji}\colon \mathcal{E}_i\to \mathcal{E}_j$ for each nonempty intersection $\mathcal{U}_i\bigcap \mathcal{U}_j$. 
These satisfy the cocycle condition $\varphi_{kj}\circ \varphi_{ji} =\varphi_{ki}$ for $\mathcal{U}_i\bigcap \mathcal{U}_j\bigcap \mathcal{U}_k$ a nonempty triple intersection. 
\end{lemma}

\begin{proof}[Proof sketch.]
Choose an ordering on the index set of the cover. On a nonempty intersection, assign
\[
\mathcal{E}(\mathcal{U}_{i_0}\bigcap \cdots \bigcap\mathcal{U}_{i_k}):=\mathcal{E}_{i_0}
\]
where the indices are listed in increasing order, so $i_0$ is the lowest index. For a nonempty inclusion $\mathcal{U}_{i_0}\bigcap \cdots \bigcap\mathcal{U}_{i_k}\hookrightarrow \mathcal{U}_{j_0}\bigcap \cdots \bigcap \mathcal{U}_{j_l}$, define the restriction map from the given data as 
\begin{equation}\label{eq:restrictionmaps}
\mathcal{E}(\mathcal{U}_{j_0}\bigcap \cdots \bigcap\mathcal{U}_{j_l})=\mathcal{E}_{j_0}\xrightarrow{\varphi_{i_0j_0}} \mathcal{E}_{i_0}=\mathcal{E}(\mathcal{U}_{i_0}\bigcap \cdots \bigcap\mathcal{U}_{i_k}).
\end{equation}
 The cocycle condition on this data guarantees that iterated restrictions agree with the evident composition of restrictions. This determines the values of a sheaf $\mathcal{E}$ on an open cover, with transition data on overlaps specified by isomorphisms. For any other open subset $\mathcal{U}\subset X$, the above values, local constancy, and the sheaf condition determine the value~$\mathcal{E}(\mathcal{U})$. A different choice of ordering on the index set of the cover determines a canonically isomorphic sheaf. 
\end{proof}

To apply this general sheaf theory to a discrete vector bundle, suppose we have a triangulation of a smooth manifold~$M$ as part of the data of the ordered simplicial complex $X$. Let $\mathcal{U}_i\subset M$ denote the open subset given by the (open) star of the vertex $i\in X_0$; the cover $\{\mathcal{U}_i\}$ is a good open cover (e.g., intersections are convex). Furthermore, $\mathcal{U}_i\bigcap \mathcal{U}_j\subset M$ is nonempty if and only if there is an edge $[i,j]\in X_1$ connecting the vertices. More generally, a $k$-fold intersection $\mathcal{U}_{i_1}\bigcap \cdots \bigcap \mathcal{U}_{i_k}$ is nonempty if and only if the participating vertices determine a $k$-simplex in $X_k$.

\begin{lemma}\label{construction:localsystem}
A discrete vector bundle with flat connection $(E,\nabla)\to X$ determines a locally constant sheaf $\mathcal{E}$ via (i) the good cover $\{\mathcal{U}_i\}$ given by open stars of vertices, (ii) the local values $\mathcal{E}_i:=E_i$, and (iii) the gluing isomorphisms $\varphi_{ji}=U_{ji}\colon E_i\to E_j$ determined by parallel transport maps. 
A map of discrete vector bundles determines a morphism of the corresponding sheaves.
\end{lemma}
\begin{proof}
Applying Lemma~\ref{lem:construction}, we need only check the cocycle condition $\varphi_{kj}\circ \varphi_{ji} =\varphi_{ki}$; but the vanishing of discrete curvature $F=d_\nabla^2=0$ is exactly the desired equality, $U_{kj} U_{ji} =U_{ki}$. A map of discrete vector bundles with connection is the data of maps of vector spaces $E_i\to E_i'$ compatible with the parallel transport maps; this is precisely the data of a morphism of locally constant sheaves compatible with restriction maps~\cite[page 109]{BoTu1982}. 
\end{proof}

\subsection{A discrete model for cohomology with local coefficients}

\begin{proposition}\label{prop:Cech}
Let $(E,\nabla)\to X$ be a flat discrete vector bundle with connection. The cochain complex $(C^\bullet(X;E),d_\nabla)$ equals the \v{C}ech complex for cohomology of the sheaf $\mathcal{E}$ for the good open cover $\{\mathcal{U}_i\}$ in the notation above. Furthermore, the module structure of \v{C}ech complex for the constant sheaf valued in $\R$ agrees with the cup product~\eqref{eq:homAction}. 
\end{proposition}

\begin{proof}
A standard reference for \v{C}ech cohomology valued in a locally constant sheaf is~\cite[\S10]{BoTu1982}. We compare the \v{C}ech complex with the discrete complex $(C^\bullet(X;E),d_\nabla)$ via the open cover given by stars of vertices. The $\mathbb{Z}$-graded vector space underlying the $\mathcal{E}$-twisted \v{C}ech complex has in degree $k$-elements the values of the sheaf on $k$-fold intersections $\mathcal{U}_{i_0}\bigcap \dots \bigcap \mathcal{U}_{i_k}$. Identifying a nonempty $k$-fold intersection with a $k$-simplex in $X$, this $\mathbb{Z}$-graded vector space is the same as the vector space underlying the $E$-twisted simplicial cohomology: the intersection of a $(k+1)$-tuple $\mathcal{U}_{i_0}\bigcap \cdots \mathcal{U}_{i_k}$ is non-empty if and only if the vertices $i_0,\dots, i_k$ span a $k$-simplex, and a $k$-cochain assigns an element of the vector space $E_{i_0}$ of the lowest vertex of this $k$-simplex. 

Next we compare the differentials on these $\mathbb{Z}$-graded vector spaces. The \v{C}ech differential is defined by the usual alternating sum of restrictions, \cite[page 110]{BoTu1982}. For the conventions on these restrictions as in~\eqref{eq:restrictionmaps}, $d_\nabla$ is precisely equal to the \v{C}ech differential under the identification in the previous paragraph: all but one of the restriction maps is the identity map, and the remaining restriction map involves a single parallel transport, just as in~\eqref{eq:dnabla}. 

The statement about cup products follows  from the definition of the product in \v{C}ech cohomology as the product of (locally constant) values, e.g.,~\cite[Equation~14.27]{BoTu1982}.
\end{proof}

Now let $E\to M$ be a smooth vector bundle with flat connection~$\nabla$. Define the locally constant sheaf $\mathcal{E}'$ on $M$ with values 
\begin{equation}\label{eq:smoothsheaf}
\mathcal{E}'(\mathcal{U}):=\{v\in \Gamma(\mathcal{U};E)\mid \nabla v=0\},\quad \mathcal{U}\subset M
\end{equation}
with restriction maps for $\mathcal{E}'$ from restricting (locally constant) sections. 

\begin{lemma}\label{lem:smoothcech}
Given a smooth vector bundle with flat connection, the discretization of $(E,\nabla)$ following \S\ref{sec:form-discretization} produces a sheaf under Proposition~\ref{prop:Cech} that is isomorphic to~\eqref{eq:smoothsheaf}. 
\end{lemma}
\begin{proof}
Proceeding with the same notation as in the proof of Proposition~\ref{prop:Cech}, to produce an isomorphism we need to verify that the values are isomorphic and the restriction maps agree:
$$
\mathcal{E}'(\mathcal{U}_i)\simeq \mathcal{E}(\mathcal{U}_i),\qquad U_{ji}={\rm res}_{ji}.
$$
We obtain an isomorphism from the fact that parallel transport along a flat connection depends only on the homotopy class of the path, and there is a unique such homotopy class for any pair of points in the (convex) open star $\mathcal{U}_i$; this identifies a covariantly constant section~\eqref{eq:smoothsheaf} with its value at $i\in \mathcal{U}_i$. The equality of restriction maps then comes from comparing the restrictions to different basepoints, which is exactly implemented by parallel transport along the edge connecting the points. 
\end{proof}

\begin{proof}[Proof of Theorem~\ref{thm:Cech}]
For a smooth manifold $M$ and a flat vector bundle $(E,\nabla)\to M$, the $E$-twisted de~Rham cohomology of $M$ agrees with \v{C}ech cohomology valued in the sheaf~\eqref{eq:smoothsheaf}, e.g., by the a \v{C}ech--de~Rham double complex as in~\cite{BoTu1982}. Hence, it suffices to compare the simplicial complex for a flat discrete connection with the  \v{C}ech complex valued in $\mathcal{E}'$. The claimed result then follows from Proposition~\ref{prop:Cech} and Lemma~\ref{lem:smoothcech}. 
\end{proof}

\begin{remark}
    The literature has other closely related versions of the twisted and simplicial de~Rham theorems, e.g., see~\cite[Chapter 14]{Halperin1983}. 
\end{remark}

\begin{example}[Orientations and twisted Poincar\'e duality]\label{ex:twistedPoincarediscrete}
A particularly important twisted de~Rham complex comes from twisting by the \emph{orientation line}, which is equivalently the top exterior power of the tangent bundle, $\Lambda^{{\rm dim}(TM)}(M)$. Explicitly, one fixes charts $\mathcal{U}_i$ determined by the triangulation, and defines a discrete line bundle with flat connection by the values $E_i=\R$ for all vertices $i\in X_0$, and the linear map $+1$ (or $-1$) according to whether the transition map on $\mathcal{U}_i\bigcap \mathcal{U}_j$ preserves (or reverse) the orientation of the tangent bundle of $M$. As a consequence of Theorem~\ref{thm:Cech}, we find that the associated complex $(C^\bullet(X;\Omega^{{\rm dim}(M)}(M)),d_\nabla)$ of discrete cochains computes the orientation-twisted de~Rham cohomology. By the \v{C}ech--de~Rham double complex isomorphism, the cohomology of this complex satisfies twisted Poincar\'e duality in the sense of~\eqref{eq:twistedPD}. It remains an interesting open question to realize this twisted Poincar\'e duality at the cochain level, much as the Hodge $*$-operator (and its discrete analog) realizes Poincar\'e duality for differential forms. 
\end{example}

\begin{example}
When applied to the constant sheaf valued in $\R$, Proposition~\ref{prop:Cech} gives a new argument demonstrating that DEC computes de~Rham cohomology. Indeed, we may identify discrete cochains with  \v{C}ech cochains valued in the constant sheaf valued in~$\R$, and then the \v{C}ech--de~Rham double complex gives an algebraic proof that DEC computes de~Rham cohomology of (triangulated) smooth manifolds. We note that there are no convergence issues to be verified in this approach, relegating all analysis to the \v{C}ech--de~Rham comparison~\cite{BoTu1982}.
\end{example}

\section{Coarsened theory}\label{sec:coarse}

Let $\X$ be a (possibly unordered) simplicial complex and $X := \sd \X$ the corresponding subdivided ordered one. As noted in~\S\ref{subsec:subdiv} we will order vertices of each simplex in $\sd \X$ by decreasing dimension and that abstract simplicial maps subdivide to yield order-preserving ones. In this setting we can define a coarsened version of our theory that applies to the coarser complex $\X$. As noted in Remark~\ref{rem:subdiv-notation} when $\X$ is a geometric simplicial complex interpret $c(\tsigma)$ for $\tsigma \in \X$  as a point associated with $\tsigma$, typically in its interior. For example this can be the barycenter. With future possibilities in mind, this point could be a circumcenter as in DEC. We will call this distinguished point $c(\tsigma)$ the \emph{center} of the simplex. The coarsening procedure in this section reproduces the theory of discrete vector bundles with connections of Christiansen and Hu~\cite{ChHu2023} (CH-bundles). In addition approaching their theory from this direction reveals possible difficulties in defining products which have not been resolved yet.

\begin{definition}
    For $X:=\sd \X$ and $(E,\nabla) \to X$ the data for the \emph{coarsened} vector bundle with connection $(\E,\tnabla) \to \X$ has fibers $\E_{c(\tsigma)}$ for $\tsigma\in \X$ and parallel transport maps $U_{c(\tsigma^k),c(\tsigma^{k+1})}$ for all $\tsigma^k\prec\tsigma^{k+1} \in \X$.
\end{definition}

\begin{remark}
By construction, fibers of $\E$ and $E$ coincide and transport maps of $\tnabla$ are a subset of the maps of $\nabla$. The definition of cochains in the coarse theory is the same as Definition~\ref{def:cochain} but the origin vertex of each $\tsigma\in \X$ is now $c(\tsigma)$. The space of $k$-cochains in the coarse theory will be denoted $\C^k(\X;\E)$. 
\end{remark}

\begin{definition}
    For $\alpha \in C^k(X;E)$ and $\tsigma^k \in \X$ the \emph{cochain coarsening map} $\coarse: C^k(X;E) \to \C^k(\X;\E)$ is defined by 
    \[ 
    \eval{\coarse(\alpha)}{\tsigma^k}_{c(\tsigma^k)} = 
    \sum_{\sigma^k\in \sd \tsigma^k\subset X} o(\sigma^k,\tsigma^k)\;\eval{\alpha}{\sigma^k}_{c(\tsigma^k)}\, .
    \]
\end{definition}

\begin{remark}
    The lowest vertex convention of fine theory and the choice of ordering vertices in $\sd \X$ by reverse dimension are designed to achieve a very important goal. This is that $\eval{\alpha}{\sigma^k}\in E_{c(\tsigma^k)} = \E_{c(\tsigma^k)}$ for all $\sigma^k\in \sd \tsigma^k$ since $c(\tsigma^k)$ is the smallest element in the vertex set of each $\sigma^k \in \sd \tsigma^k$ (Remark~\ref{rem:subdiv-vertex-order}). Thus the coarsening procedure is a simple addition with orientation without any transports needed.
\end{remark}

\begin{definition}
    The \emph{coarse connection} $\tnabla:\C^0(\X;\E)\to \C^1(\X;\E)$ is defined by its value on edge $\tsigma^1 = [\tilde{u},\tilde{v}] \in \X$ as
    \[
    \eval{\tnabla \tilde{s}}{\tsigma^1}_{c(\tsigma^1)}  := U_{c(\tsigma^1),\,c(\tilde{v})} \eval{\tilde{s}}{\tilde{v}}- U_{c(\tsigma^1),\,c(\tilde{u})}\eval{\tilde{s}}{\tilde{u}}\, ,
    \]
    for $\tilde{s} \in \C^0(\X;\E)$. The \emph{coarse discrete exterior covariant derivative}
    $d_\tnabla:\C^k(\X;\E)\to \C^{k+1}(\X;\E)$ is defined by its value on $\tsigma^{k+1} \in \X$ as
    \[
    \eval{d_\tnabla\tilde{\alpha}}{\tsigma^{k+1}}_{c(\tsigma^{k+1})} = 
    \sum_{\tsigma^k\prec\tsigma^{k+1}\in \X} o(\tsigma^k,\tsigma^{k+1})\;
    U_{c(\tsigma^{k+1}),c(\tsigma^k)}\;
    \eval{\tilde{\alpha}}{\tsigma^k}_{c(\tsigma^k)}\, ,
    \]
    for $\tilde{\alpha}\in\C^k(\X;\E)$.
\end{definition}

\begin{proposition}
    The coarsening map commutes with the exterior covariant derivative, $d_\tnabla \coarse = \coarse d_\nabla$.
\end{proposition}
\begin{proof}
    Let $\alpha \in C^k(X;E)$ and $\tsigma^{k+1}\in \X$. Then
    \begin{align}\label{eq:coarse-commute-fullsum}
        \Eval{\coarse(d_\nabla\alpha)}{\tsigma^{k+1}}_{c(\tsigma^{k+1})} &=
        \sum_{\sigma^{k+1}\in \sd \tsigma^{k+1}} o(\sigma^{k+1},\tsigma^{k+1})\;
        \sum_{\tau^k \prec \sigma^{k+1}} o(\tau^k, \sigma^{k+1})\; 
        U_{c(\tsigma^{k+1}),v(\tau^k)}\;
        \eval{\alpha}{\tau^k}_{v(\tau^k)}\, ,
    \end{align}
    where $v(\tau^k)$ is the origin vertex of $\tau^k$. The set of all $\tau^k \prec \sigma^{k+1} \in \sd\tsigma^{k+1}$ is the union of two disjoint sets of $\tau^k$, those that are in the boundary of $\tsigma^{k+1}$ and those that are not, or more precisely those that are in $\sd \partial \tsigma^{k+1}$ and the others. 
    
    For $\tau^k\in \sd \partial \tsigma^{k+1}, \exists!\, \sigma^{k+1} \succ \tau^k$ with $\sigma^{k+1}\in \sd \tsigma^{k+1}$ and if $\tau^k \in \sd \tsigma^k, \tsigma^k \prec \tsigma^{k+1}$ the origin vertex is $v(\tau^k) = c(\tsigma^k)$ since that is the lowest vertex for any such $\tau^k \prec \tsigma^k$.
    For $\tau^k \not\in \sd\partial \tsigma^{k+1}, \exists! (\sigma^{k+1}_+, \sigma^{k+1}_-), \sigma^{k+1}_\pm \succ \tau^k, \sigma^{k+1}_\pm \in \sd \tsigma^{k+1}$ and the origin vertex $v(\tau^k) = c(\tsigma^{k+1})$ since that is the lowest vertex in $\sd \tsigma^{k+1}$. Thus the RHS of~\eqref{eq:coarse-commute-fullsum} can be written as
    \begin{multline}\label{eq:coarse-commute-split}
        \sum_{\tsigma^k\prec\tsigma^{k+1}}\sum_{\tau^k\in \sd \tsigma^k}
        o(\sigma^{k+1},\tsigma^{k+1})\; o(\tau^k,\sigma^{k+1})\;
        U_{c(\tsigma^{k+1}),c(\tsigma^k)}\;
        \eval{\alpha}{\tau^k}_{c(\tsigma^k)} +\\
        \sum_{\tau^k\not\in \sd \partial\tsigma^{k+1}} 
        o(\sigma^{k+1}_+,\tsigma^{k+1})\; o(\tau^k,\sigma^{k+1}_+)\;
        \eval{\alpha}{\tau^k}_{c(\tsigma^{k+1})} +
        o(\sigma^{k+1}_-,\tsigma^{k+1})\; o(\tau^k,\sigma^{k+1}_-)\;
        \eval{\alpha}{\tau^k}_{c(\tsigma^{k+1})}\,.
    \end{multline}
    The transport operators needed in the second summation are $U_{c(\tsigma^{k+1}),c(\tsigma^{k+1})}$ which are identity and thus not written. For $\tsigma^k \prec \tsigma^{k+1}, \tau^k \in \sd \tsigma^k$ the orientation factor can be rewritten using
    \[
    o(\sigma^{k+1},\tsigma^{k+1})\; o(\tau^k,\sigma^{k+1}) = 
    o(\tau^k,\tsigma^k)\; o(\tsigma^k,\tsigma^{k+1})\, ,
    \] 
    where $\sigma^{k+1}$ is the unique $(k+1)$-simplex in $\sd \tsigma^{k+1}$ containing $\tau^k$. For $\tau^k \not\in \sd\partial\tsigma^{k+1}$ the orientation factor satisfies 
    \[
    o(\sigma^{k+1}_+,\tsigma^{k+1})\; o(\tau^k,\sigma^{k+1}_+) = -o(\sigma^{k+1}_-,\tsigma^{k+1})\; o(\tau^k,\sigma^{k+1}_-)\, ,
    \]
    where $\sigma^{k+1}_\pm$ are the unique $(k+1)$-simplices in $\sd \tsigma^{k+1}$ containing $\tau^k$. Thus~\eqref{eq:coarse-commute-split} simplifies to give
    \begin{align*}
        \Eval{\coarse(d_\nabla\alpha)}{\tsigma^{k+1}}_{c(\tsigma^{k+1})} &=
        \sum_{\tsigma^k\prec\tsigma^{k+1}}
        \sum_{\tau^k\in \sd \tsigma^k}
        o(\tau^k,\tsigma^k)\; o(\tsigma^k,\tsigma^{k+1})\;
        U_{c(\tsigma^{k+1}),c(\tsigma^k)}\;
        \eval{\alpha}{\tau^k}_{c(\tsigma^k)}  + 0\\
        &= \sum_{\tsigma^k\prec\tsigma^{k+1}} o(\tsigma^k,\tsigma^{k+1})\;
        U_{c(\tsigma^{k+1}),c(\tsigma^k)}\;
        \sum_{\tau^k\in \sd \tsigma^k} o(\tau^k,\tsigma^k)\;
        \eval{\alpha}{\tau^k}_{c(\tsigma^k)}\\
        &= \sum_{\tsigma^k\prec\tsigma^{k+1}} o(\tsigma^k,\tsigma^{k+1})\;
        U_{c(\tsigma^{k+1}),c(\tsigma^k)}\;
        \Eval{\coarse(\alpha)}{\tsigma^k}_{c(\tsigma^k)}\\
        &= \Eval{d_\tnabla(\coarse(\alpha))}{\tsigma^{k+1}}_{c(\tsigma^{k+1})}\, .
    \end{align*}
\end{proof}

\begin{definition}
   For $\tsigma^k,\tsigma^{k+l} \in \X$, $0 \le k \le \dim \X - l$, a \emph{coarse homomorphism valued $l$-cochain} assigns to each $l$-dimensional combinatorial cube $D(\tsigma^k\preceq\tsigma^{k+l}) \in \Q(\X)$ a homomorphism $\E_{c(\tsigma^k)} \to \E_{c(\tsigma^{k+l})}$. Here $\Q(\X)$ is the cubical refinement of $\X$. The spaces of homomorphism valued $l$-cochains will be denoted $\C^l(\Q(\X);\Hom(\E))$ and the evaluation of $\tilde{B} \in \C^l(\Q(\X);\Hom(\E))$ on the $l$-dimensional cube $D(\tsigma^k\preceq\tsigma^{k+l}) \in \Q(\X)$ will be denoted $\Eval{\tilde{B}}{D(\tsigma^k\preceq\tsigma^{k+l})}_{c(\tsigma^{k+l}), c(\tsigma^k)}$ with subscripts indicating that this is a homomorphism $\E_{c(\tsigma^k)} \to \E_{c(\tsigma^{k+l})}$.
\end{definition}

\begin{remark}
    A remark similar to Remark~\ref{rem:why-no-homE} can be made here. We are using $\Hom(\E)$ without defining it. Given $\E \to \X$ we could have defined coarse homomorphism bundle $\Hom(\E)$ over $\Q(\X)$ to have the data of all linear maps $\E_{c(\tsigma^k)} \to \E_{c(\tsigma^{k+l})}$ for each cube $D(\tsigma^k\preceq\tsigma^{k+l})$. However, as noted in that earlier remark, spaces of cochains would then be sections of this bundle.
\end{remark}
    
\begin{remark} \label{rem:additive-Hom}
    Let $X := \sd \X$ and consider $E\to X$ and $\Hom(E) \to X$. The $l$-dimensional cube $D(\tsigma^k\preceq\tsigma^{k+l}) \in \Q(\X)$ for $\tsigma^k, \tsigma^{k+l} \in \X$ has the property that all the $l!$ simplices from $X$ of dimension $l$ that make it up have the same smallest vertex and largest vertex (Remark~\ref{rem:cube-vertex-order}). As a result the evaluations of a homomorphism valued cochain in $C^l(X; \Hom(E))$ in all these $l!$ simplices in $X$ can be added. This is useful in coarsening of homomorphism valued cochains defined next.
\end{remark}

\begin{definition}\label{def:hom-coarsening}
    For $B \in C^l(X;\Hom(E))$ and $\tsigma^k \preceq \tsigma^{k+l} \in \X$, the \emph{homomorphism coarsening map} $\coarse^\Hom: C^l(X;\Hom(E)) \to \C^l(Q(\X);\Hom(\E))$ is defined by 
    \begin{multline*}
      \Eval{\coarse^\Hom(B)}{D(\tsigma^k\preceq\tsigma^{k+l})}_{c(\tsigma^{k+l}),c(\tsigma^k)} =\\
    \sum_{c[\tsigma^{k+l}\succeq \tsigma^k] \in \sd \X}
    \sgn(c[\tsigma^{k+l}\succeq \tsigma^k])\;\Eval{B}{c[\tsigma^{k+l}\succeq \tsigma^k]}_{c(\tsigma^{k+l}),c(\tsigma^k)}\, ,        
    \end{multline*}

    where $\sgn(c[\tsigma^{k+l}\succeq \tsigma^k]) = (-1)^{kl}o(c[\tsigma^k\succeq\tsigma^0],\tsigma^k)\;
    o(c[\tsigma^{k+l}\succeq\tsigma^0],\tsigma^{k+l})$.
\end{definition}

\begin{definition}\label{def:coarse-curvature}
    Given $(\E,\tnabla) \to \X$, the \emph{coarse discrete curvature} of $\tnabla$ , denoted $F_\tnabla$, is an element of $C^2(Q(\X);\Hom(\E))$. For any pair of simplices $\tsigma^k \prec \tsigma^{k+2} \in \X, 0 \le k \le \dim \X - 2$, the evaluation $\Eval{F_\tnabla}{D(\tsigma^k\prec \tsigma^{k+2})}_{c(\tsigma^{k+2}),c(\tsigma^k)}$ is a homomorphism $\E_{c(\tsigma^k)} \to \E_{c(\tsigma^{k+2})}$. It is defined as
    \begin{multline*}
    \Eval{F_\tnabla}{D(\tsigma^k\prec\tsigma^{k+2})}_{c(\tsigma^{k+2}),c(\tsigma^k)} := \\
    \pm\big(
        U_{c(\tsigma^{k+2}),\,c(\tsigma_1^{k+1})}\;U_{c(\tsigma_1^{k+1}),\, c(\tsigma^k)} \; - 
        U_{c(\tsigma^{k+2}),\,c(\tsigma_2^{k+1})}\;U_{c(\tsigma_2^{k+1}),\, c(\tsigma^k)}
        \big)\, ,     
    \end{multline*}
    where $\tsigma^k \prec \tsigma^{k+1}_1, \tsigma^{k+1}_2 \prec \tsigma^{k+2}$.
    The $\pm$ sign on the RHS depends on the orientation of $D(\tsigma^k\prec\tsigma^{k+2})$, $\tsigma^k$ and $\tsigma^{k+2}$.
\end{definition}

\begin{proposition}
With $\X, X=\sd \X, (E,\nabla)\to X$ and $(\E,\tnabla)\to \X$ as above, coarse curvature $F_\tnabla$ is the coarsened fine curvature. That is, $F_\tnabla = \coarse^\Hom(F_\nabla)$.
\end{proposition}
\begin{proof}
    Follows from definitions.
\end{proof}

We will not formally define $d_{\tnabla^\Hom}$ but instead point to the corresponding definition in ~\cite{ChHu2023} and note that $\coarse\; d_{\nabla^\Hom} = d_{\tnabla^\Hom} \coarse^\Hom$.

\subsection{Difficulties in defining coarse products}
Defining a product in the coarse setting turns out to be challenging and is an unresolved issue. The coarsening of the finer cup does not lead to a cup in the coarse setting. That is, $\coarse(\alpha\smile w) \ne \coarse(\alpha) \smile \coarse(w)$ but instead needs values of fine $\alpha$. This can be seen in a simple computation using 1-cochains. Another alternative to consider is to use the ordinary cup product as we have used earlier. This however leads to a curvature obstruction in the Leibniz rule. While in the finer theory the curvature obstruction only appears if cup is anti-symmetrized, in the coarse theory the obstruction shows up even for the ordinary cup product as shown in the following example.

\begin{example}\label{ex:CHcrvtr_obstrctn}
  Let $(\E,\tnabla)\to \X$ be a coarse vector bundle with connection over $\X$, the simplicial complex of a tetrahedron $[0,1,2,3]$ with cochains $\alpha \in C^1(\X; \E)$ and $w \in C^1(\X)$. One would like that a Leibniz rule 
  \begin{equation} \label{eq:CHLeibniz}
  d_\tnabla(\alpha \smile w) = d_\tnabla\alpha \smile w - \alpha \smile dw
\end{equation}
hold 
when both sides are evaluated on $[0,1,2,3]$. We will use the shorthand $U_{0123,123}$ to mean transport from $c([1,2,3])$ to $c([0,1,2,3])$ etc. Using the definition of $d_\tnabla$ the value of the LHS $\eval{d_\tnabla(\alpha\smile w)}{[0,1,2,3]}$ is
\begin{multline*}
U_{0123,123} \eval{\alpha \smile w}{[1,2,3]} - U_{0123,023}\eval{\alpha\smile w}{[0,2,3]} \\
+ U_{0123,013}\eval{\alpha \smile w}{[0,1,3]} - U_{0123,012}\eval{\alpha\smile w}{[0,1,2]}\, ,
\end{multline*}
which finally evaluates to
\begin{align}\label{eq:CHLeibnizLHS}
  &U_{0123,123} U_{123,12}\alpha_{12}w_{23} - U_{0123,023} U_{023,02} \alpha_{02}w_{23} \notag\\
  &\qquad + U_{0123,013} U_{013,01} \alpha_{01}w_{13} - U_{0123,012} U_{012,01}\alpha_{01}w_{12}\, .
\end{align}
The first term on the RHS of~\eqref{eq:CHLeibniz} yields
\begin{align}\label{eq:CHLeibnizRHS1}
  \eval{d_\tnabla \alpha \smile w}{[0,1,2,3]} &= U_{0123,012}\eval{d_\tnabla \alpha}{[0,1,2]}w_{23}\notag\\
  &= U_{0123,012}U_{012,12}\alpha_{12}w_{23} - U_{0123,012}U_{012,02}\alpha_{02}w_{23} \notag\\
  &\qquad + U_{0123,012}U_{012,01}\alpha_{01}w_{23}\,
\end{align}
and the second term on the RHS of~\eqref{eq:CHLeibniz} yields
\begin{align} 
  -\eval{\alpha\smile dw}{[0,1,2,3]} &= -U_{0123,01}\alpha_{01}\eval{dw}{[1,2,3]} \notag \\
  &=-U_{0123,01}\alpha_{01}w_{23} + U_{0123,01}\alpha_{01}w_{13} \notag\\
  &\qquad - U_{0123,01}\alpha_{01}w_{12}\, .
\label{eq:CHLeibnizRHS2}  
\end{align}
The necessary and sufficient conditions for~\eqref{eq:CHLeibnizLHS} to equal the sum of \eqref{eq:CHLeibnizRHS1} and \eqref{eq:CHLeibnizRHS2} are obtained by matching terms and these conditions are
\begin{align*}
  U_{0123,123} U_{123,12} &= U_{0123,012} U_{012,12}\\
  U_{0123,023} U_{023,02} &= U_{0123,012} U_{012,02}\\
  U_{0123,013} U_{013,01} &= U_{0123,01}\\
  U_{0123,012} U_{012,01} &= U_{0123,01}\, .
\end{align*}
The last two can be combined to yield the following necessary conditions for Leibniz rule to hold in this case:
\begin{align*}
  U_{0123,123} U_{123,12} - U_{0123,012} U_{012,12} &= 0\\
  U_{0123,023} U_{023,02} - U_{0123,012} U_{012,02} &= 0\\
  U_{0123,013} U_{013,01} - U_{0123,012} U_{012,01} &= 0\, .
\end{align*}

These are the discrete curvatures associated with the edges of the triangle $[0,1,2]$ in the CH-bundles framework.
Thus for Leibniz rule~\eqref{eq:CHLeibniz} to be satisfied on the tetrahedron $[0,1,2,3]$ these curvatures must vanish.
\end{example}

\section{Conclusion and outlook}

We have given a combinatorial framework for studying discretizations of vector bundles with connection. Using the discrete covariant derivative $d_\nabla$ as the building block, curvature emerges natural as a homomorphism valued 2-cochain. Various structures in smooth geometry have immediate discrete counterparts, e.g., curvature as ``small" holonomy, the Bianchi identity for the curvature operator, connection 1-cochains, gauge transformations, and flat connections leading to cohomology twisted by local coefficients. From the start, we built in naturality for all of these discrete geometric objects with respect to (locally ordered) simplicial maps. This naturality is both an important theoretical constraint and also opens a door to potential applications in (discrete) dynamics. 

While there is no ``unique" or ``best" framework for discrete vector bundles, the framework developed in this paper seems to be quite flexible, allowing for comparisons with work of other authors. In particular, we provided a coarsening procedure that forges an explicit link with the work of Christiansen and Hu~\cite{ChHu2023}. 

The work of~\cite{BrToGaDe2024} builds on an earlier version of our paper. Alternation procedures pursued in their setup provide certain convergence results, though may also have the effect of obscuring certain geometric structures like the product of a homomorphism-valued form with a vector-valued form. Ultimately, one will almost surely need to tune the theoretical framework to the desired application. 

Such applications largely center around partial differential equations for vector valued-forms, which typically incorporate metric geometry via the Hodge star operator. This is perhaps most famous in the (vacuum) Yang--Mills equation $d_\nabla \ast F_\nabla = 0$. Scalar-valued DEC provides some hints for how to build such a Hodge star operator on vector bundle-valued cochains, but many questions remain. This is an important future direction. 

Once these classical PDE have discrete counterparts, one can ask whether discrete solutions converge to smooth solutions. Such questions can be quite subtle, depending sensitively on the regularity of solutions sought and the norm in which convergence is measured. This is another important long-term goal of the area. 

A purely cochains version of the Bernstein-Gelfand-Gelfand construction~\cite{CaSlSo2001} as used in numerical analysis~\cite{ArHu2021} is another important goal. In this setting, the discretization of double forms as cochains and first Bianchi sum operators for cochains are important future directions on which partial progress has been made~\cite{BeGa2025,HuLi2025,Zhu2026}.

\bigskip
\noindent\textbf{Acknowledgement:} ANH was supported in part by NSF DMS-2208581 and by KAUST Office of Sponsored Research under Award URF/1/3723-01-01. DBE was supported in part by NSF grants DMS-2205835 and DMS-2340239. We thank Georg Sprenger comments on an earlier version of this paper and Snorre Christiansen for discussions and comments on the current and previous versions. We thank Evan Gawlik for discussions on convergence issues.

\printbibliography

\end{document}